\newcolumntype{C}[1]{>{\centering\arraybackslash}p{#1}}
\newcommand{\SF}{\varepsilon}
\newcommand{\T}{\mathbb{T}}
\newcommand{\F}{\mathcal{F}}
\renewcommand{\SF}{{\mathcal{F}}}
\newcommand{\SL}{{\mathcal{L}}}
\newcommand{\ST}{{\mathcal{T}}}
\newcommand{\SU}{{\mathcal{U}}}
\newcommand{\Z}{\mathbb{Z}}
\newcommand{\R}{\mathbb{R}}
\newcommand{\id}{\textup{id}}
\renewcommand{\mod}{\textup{mod}}
\newcommand{\we}{\wedge}
\newcommand{\pp}[2]{\frac{\partial#1}{\partial#2}}
\newcommand{\bd}{\partial}
\newcommand{\frbd}[1]{\frac{\bd}{\bd #1}}
\newtheorem{proposition}{Proposition}
\newtheorem{theorem}[proposition]{Theorem}
\newtheorem{definition}[proposition]{Definition}
\theoremstyle{remark}
\newtheorem{remark}[proposition]{Remark}
\begin{document}

\bibliographystyle{alpha}

\title[Cotangent models for integrable systems]{Cotangent models for integrable systems}

\author{Anna Kiesenhofer}

\author{Eva Miranda}
\address{Department of Mathematics
 Universitat Polit\`{e}cnica de Catalunya and BGSMath, EPSEB, Avinguda del Doctor Mara\~{n}\'{o}n 44--50, Barcelona, Spain}
\email{anna.kiesenhofer@upc.edu}\email{eva.miranda@upc.edu}
\thanks{ Both authors are supported by the grants reference number MTM2015-69135-P (MINECO/FEDER) and reference number 2014SGR634 (AGAUR). A. Kiesenhofer is supported by a UPC doctoral grant.}

\date{\today}
\begin{abstract} We associate \emph{cotangent models}  to a neighbourhood of a Liouville torus in symplectic and Poisson manifolds focusing on $b$-Poisson/$b$-symplectic manifolds.  The semilocal equivalence with such models uses the corresponding action-angle  theorems in these settings:  the theorem of Liouville-Mineur-Arnold \cite{arnold} for symplectic manifolds and an action-angle theorem for regular Liouville tori in Poisson manifolds \cite{Laurent-Gengoux2010}. Our models comprise regular Liouville tori of Poisson manifolds but also consider the Liouville tori on the singular locus of a $b$-Poisson manifold. For this latter class of Poisson structures  we define a \emph{twisted cotangent model}. The equivalence with this twisted cotangent model is given by  an action-angle theorem recently proved in  \cite{KMS}.  This viewpoint of cotangent models provides a new machinery to construct examples of integrable systems, which are especially valuable in  the $b$-symplectic case where  not many sources of examples are known.   At the end of the paper we introduce non-degenerate singularities as lifted cotangent models on $b$-symplectic manifolds and discuss some generalizations of these models to general Poisson manifolds.
\end{abstract}
\maketitle


\section{Introduction}

The action-angle theorem of Liouville-Mineur-Arnold (\cite{arnold}, \cite{duistermaat}) gives a symplectic description of an integrable system in a neighbourhood of a Liouville torus\footnote{In \cite{duistermaat} obstructions to the global validity of this model are determined.} as a trivial fibration by Lagrangian tori (Liouville tori). In this paper we present a reformulation of the Liouville-Mineur-Arnold theorem as a symplectic equivalence in a neighbourhood of a Liouville torus to an integrable system determined by the cotangent lift of the actions by translations on the Liouville torus. Having such a cotangent lift model for integrable systems is useful to produce examples as lifts of abelian actions on the base. The  Hamiltonian nature of the lifted action is automatic  (see \cite{guilleminandsternberg}) and the fact that the action on the base is given by an abelian group  yields an integrable system on the total space.

  This paper is also devoted to establishing cotangent models for integrable systems on Poisson manifolds with a special focus on  $b$-Poisson manifolds.

   In this paper not only regular Liouville tori on Poisson manifolds are considered but also Liouville tori lying on the singular locus of a  class of Poisson manifolds called $b$-Poisson manifolds. $b$-Symplectic/$b$-Poisson manifolds have been the object of study of recent works in Poisson geometry (cf. \cite{Guillemin2011}, \cite{Guillemin2012}, \cite{Guillemin2013}, \cite{gualtierietal}, \cite{KMS}  and references therein). For these manifolds, such a reformulation is possible via a  dual Liouville form obtaining what  we call the \emph{twisted $b$-cotangent lift} in Section \ref{sec:general_cotlift}. The equivalence with these models uses the corresponding theorem of action-angle coordinates \cite{KMS}.  This new point of view turns out to be very fruitful because it provides a handful of examples of $b$-integrable systems, which was missing in the literature and which we construct in Section \ref{sec:examplesb}.  One of the families of examples is produced by considering abelian symmetries of affine manifolds and applying the twisted $b$-cotangent lift recipe to generate  $b$-integrable systems.

In Section \ref{sec:general_cotlift} we also consider the canonical $b$-cotangent lift which can be used to furnish (singular) examples of Hamiltonian actions on $b$-symplectic manifolds.

Singularities of integrable systems are present in mechanical systems and they correspond to equilibria of  Hamiltonian systems. From a topological point of view, an integrable system on a compact manifold must have singularities. In \cite{eliasson1}, \cite{miranda}, \cite{miranda1}, \cite{mirandazung}, in total analogy with the Liouville-Mineur-Arnold theorem, a symplectic Morse-Bott theory is constructed in a neighbourhood of a singular compact orbit.

In Section \ref{sec:examples}  we present non-degenerate singular integrable systems in the  $b$-symplectic case as  twisted $b$-cotangent lifts of actions by abelian groups which have fixed points on the base or are non-compact. This provides several examples with different kinds of singularities (elliptic, hyperbolic, focus-focus).  This section is an invitation to the study of singularities of integrable systems on $b$-symplectic manifolds. We plan to study normal form theorems for these singularities in $b$-symplectic manifolds  as equivalence to the twisted $b$-cotangent models in the future, thus readdressing the normal form theory already initiated in \cite{Guillemin2012}. We end up the paper discussing the models for the general Poisson case. The action-angle coordinate theorem proved in \cite{Laurent-Gengoux2010} is used to give a product-type model in a neighbourhood of a regular Liouville torus of cotangent lift with a parameter space endowed with the trivial Poisson structure. A starting point of the study of general models for Liouville tori on non-regular Poisson leaves is the action-angle theorem for non-commutative systems proved in \cite{Laurent-Gengoux2010}.

\section{The set-up}\label{sec:pre}

In this section we review some basic results concerning integrable systems on symplectic and Poisson manifolds. In the wide class of Poisson manifolds we single out a subclass called $b$-Poisson manifolds or $b$-symplectic manifolds. The proofs of the main results in this section are contained in other papers in the literature to which we refer. We hint at the main ideas of the proofs.

\subsection{Concerned manifolds}

A \textbf{symplectic manifold} is an even dimensional manifold endowed with a closed non-degenerate $2$-form $\omega$. The non-degeneracy of $\omega$ entails orientability of the underlying manifold and yields a duality between forms and vector fields generalizing the correspondence between functions and Hamiltonian vector fields.

 A \textbf{Poisson manifold} is a pair $(M, \Pi)$ where $\Pi$ is a bivector field satisfying the integrability equation $[\Pi, \Pi]=0$ where $[\cdot,  \cdot]$ stands for the Schouten bracket. In contrast to symplectic manifolds,  there are no constraints on the dimension or orientability of a Poisson manifold.  A Hamiltonian vector field is simply defined via the equality
$X_f= \Pi(df,\cdot)$ and the Poisson bracket can be expressed as  $\{f_i,f_j\}:=\Pi(df_i,df_j)$ .  The equations associated to many mechanical systems are often better formulated in the Poisson language (such as the Gelfand-Ceitlin systems); others can only be formulated in the Poisson context because the system has parameters that blow up.

A class of Poisson manifolds called $b$-Poisson manifolds was recently introduced and studied in
  \cite{Guillemin2011} and \cite{Guillemin2012}. These manifolds are symplectic away from a hypersurface $Z$; along $Z$ the symplectic form  has a certain controlled singularity. We first want to describe this singularity from the Poisson viewpoint:
 A symplectic structure $\omega$, which is a section of $\bigwedge^2 T^\ast M$ induces a ``dual" bivector field $\Pi$, i.e. a section of $\bigwedge^2 T M$:
$$\Pi(df,dg):=\omega(X_f,X_g)=\{f,g\},\qquad f,g\in C^\infty(M).$$
 It can be shown that the bivector field $\Pi$ associated to a symplectic form satisfies the Jacobi identity, which means that it is a {\it Poisson} bivector field. Now consider the case where we start with a symplectic form on $M\backslash Z$ whose dual Poisson structure vanishes along $Z$ in the following controlled way. The definition below is due to {Guillemin-Miranda-Pires (\cite{Guillemin2011}, \cite{Guillemin2012}}),
\begin{definition}\label{definition:firstb} Let $(M^{2n},\Pi)$ be an oriented Poisson manifold. If the map
$$p\in M\mapsto(\Pi(p))^n\in\bigwedge^{2n}(TM)$$
is transverse to the zero section, then $\Pi$ is called a \textbf{$b$-Poisson structure} on $M$. The hypersurface $Z=\{p\in M|(\Pi(p))^n=0\}$ is the {\bf critical hypersurface} of $\Pi$. The pair $(M,\Pi)$ is called a \textbf{$b$-Poisson manifold}.
\end{definition}

This transversality condition gives powerful information about the $b$-Poisson structure. In particular, Weinstein's splitting theorem \cite{weinstein} looks particularly simple for $b$-Poisson structures and a Darboux-type result can be obtained for them as it was proved in \cite{Guillemin2012}.

\begin{theorem}[{\bf $b$-Darboux theorem, \cite{Guillemin2012}}]\label{thm:bdarboux} Let $(M, \Pi)$ be a $b$-Poisson manifold. Then, on a neighbourhood of a point $p\in Z$ in the critical surface, there exist coordinates $(x_1,y_1,\dots ,x_{n-1},y_{n-1}, z, t)$  centered at $p$ such that the critical hypersurface is given by $z=0$ and
$$\omega=\sum_{i=1}^{n-1} \frac{\partial}{\partial{x_i}}\wedge \frac{\partial}{\partial{y_i}}+{z}\,\frac{\partial}{\partial{z}}\wedge \frac{\partial}{\partial{t}}.$$
\end{theorem}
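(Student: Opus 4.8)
The plan is to localise the problem to a two-dimensional transverse slice using Weinstein's splitting theorem, and then to normalise the resulting planar Poisson structure by an explicit coordinate change. Throughout I work with the bivector $\Pi$ (the right-hand side of the displayed formula is a bivector). The first step is to pin down the rank of $\Pi$ at $p\in Z$. Away from $Z$ the structure is symplectic, so $\Pi$ has full rank $2n$; on $Z$ we have $\Pi^n=0$, so the rank drops to at most $2n-2$. Writing $\Pi=\frac12\sum\pi_{ij}\,\partial_i\wedge\partial_j$ in local coordinates, one has $\Pi^n=n!\,\mathrm{Pf}(\pi)\,\partial_1\wedge\cdots\wedge\partial_{2n}$, so transversality of the section $\Pi^n$ to the zero section of $\bigwedge^{2n}TM$ is exactly the statement that $0$ is a regular value of the Pfaffian. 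If the rank at $p$ were $\le 2n-4$, then every $(2n-2)\times(2n-2)$ sub-Pfaffian would vanish at $p$; since these sub-Pfaffians are the partial derivatives $\partial\,\mathrm{Pf}/\partial\pi_{ij}$, the differential of $\mathrm{Pf}(\pi)$ would vanish at $p$, contradicting transversality. Hence the rank of $\Pi$ at $p$ is exactly $2n-2$.

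With the rank equal to $2n-2$, Weinstein's splitting theorem \cite{weinstein} provides local coordinates $(x_1,y_1,\dots,x_{n-1},y_{n-1},z,t)$ centred at $p$ in which $\Pi$ is a product of a symplectic block and a transverse factor,
$$\Pi=\sum_{i=1}^{n-1}\partial_{x_i}\wedge\partial_{y_i}+f(z,t)\,\partial_z\wedge\partial_t,$$
where the transverse factor $f(z,t)\,\partial_z\wedge\partial_t$ is a Poisson bivector on the two-dimensional slice (integrability is automatic in dimension two) with $f(p)=0$. I then transport the transversality condition to this planar factor: using $\big(\sum\partial_{x_i}\wedge\partial_{y_i}\big)^n=0$ and $(\partial_z\wedge\partial_t)^2=0$, a single term survives in the binomial expansion of $\Pi^n$, giving
$$\Pi^n=n!\,f(z,t)\;\partial_{x_1}\wedge\partial_{y_1}\wedge\cdots\wedge\partial_{x_{n-1}}\wedge\partial_{y_{n-1}}\wedge\partial_z\wedge\partial_t.$$
Since the multivector on the right is nowhere zero, $\Pi^n$ is transverse to the zero section if and only if $f$ is transverse to $0$; in particular $\{f=0\}$ is a smooth curve through $p$ in the slice, $df\neq0$ there, and $Z$ is locally $\{f=0\}$ together with the symplectic directions.

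The final step normalises the planar factor. Straightening the curve $\{f=0\}$, which is possible because $df\neq0$ on it, I may assume $\{f=0\}=\{z=0\}$, so that $f=z\,g(z,t)$ with $g(p)\neq0$ and hence $g\neq0$ on a neighbourhood. A coordinate change involving only the transverse variables, $\tilde z=z$ and $\tilde t=h(z,t)$, transforms the planar bivector by $\partial_z\wedge\partial_t\mapsto h_t\,\partial_{\tilde z}\wedge\partial_{\tilde t}$, so choosing $h$ with $\partial h/\partial t=1/g$ (obtained by integrating in $t$, with $h_t=1/g\neq0$ guaranteeing a valid coordinate) yields $f\,\partial_z\wedge\partial_t=\tilde z\,\partial_{\tilde z}\wedge\partial_{\tilde t}$. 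Because this change affects neither the $x_i$ nor the $y_i$, it leaves the symplectic summand intact; relabelling gives the asserted normal form with $Z=\{z=0\}$.

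The main obstacle is the transversality bookkeeping: one must verify that transversality of the top power $\Pi^n$ forces the rank to be exactly $2n-2$ at $p$ (so that Weinstein's theorem produces a genuinely two-dimensional transverse factor) and that this condition descends precisely to $0$ being a regular value of the single transverse function $f$. Once these rank and regularity facts are secured, the planar normalisation is a routine ODE argument, the only care being to keep the change of coordinates inside the transverse slice so as not to disturb the symplectic block.
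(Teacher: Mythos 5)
Your argument is correct and follows exactly the route the paper indicates for this theorem: the constructive steps of Weinstein's splitting theorem to isolate a two-dimensional transverse factor, followed by an ad-hoc transformation in the transverse variables that uses the transversality hypothesis to normalise $f\,\partial_z\wedge\partial_t$ to $z\,\partial_z\wedge\partial_t$. The paper itself only sketches this (deferring the details, and an alternative Moser-path proof in the language of $b$-forms, to \cite{Guillemin2012}), and your rank computation via the Pfaffian and the final ODE normalisation fill in that sketch correctly.
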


This theorem can be proved using a  \emph{hands-on proof} following the constructive steps of Weinstein's splitting theorem and applying an ad-hoc transformation at the last step using the transversality assumption.

For a reason that will become clear below such Poisson manifolds can be treated dually as forms with poles and are often referred to as $b$-symplectic manifolds. One of the benefits of dealing with such Poisson structures using forms is that, for example, a proof of the $b$-Darboux theorem can be given using the Moser's path method for these generalized forms as it was done in \cite{Guillemin2012}.

\subsection{A crash course on $b$-Poisson manifolds}

In this section we present basic results on $b$-Poisson manifolds that will be needed in this paper. Most of the results in the study of the geometry of $b$-Poisson manifolds are contained in \cite{Guillemin2011}and \cite{Guillemin2012} to which we defer for their proofs. The proofs of the results concerning group actions on these manifolds can be found in \cite{Guillemin2013}.

\subsubsection{Poisson geometry of the critical hypersurface}

One of the immediate consequences of the definition of $b$-Poisson manifolds is that the critical hypersurface is a smooth hypersurface. Not only that, as it was proved in \cite{Guillemin2012} it is also a regular Poisson submanifold of $(M,\Pi)$. In other words, the Poisson structure $\Pi$ induces a Poisson structure on $Z$ which is regular. In \cite{Guillemin2011}, the geometry and topology of the symplectic foliation induced in $Z$ was completely described.  As it was proved in \cite{Guillemin2011}, the symplectic leaves in $Z$ are all symplectomorphic. Moreover if $Z$ is compact, then $Z$ is a mapping torus.
\begin{theorem}[\textbf{Guillemin-Miranda-Pires, \cite{Guillemin2011}}] The critical hypersurface of a $b$-Poisson manifold is a Poisson submanifold with symplectomorphic symplectic leaves. If $Z$ is compact then $Z$ is a mapping torus associated to the flow of a Poisson vector field transverse to the symplectic foliation in $Z$.
\end{theorem}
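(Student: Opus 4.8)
The plan is to verify the three assertions in turn, using the $b$-Darboux normal form of Theorem \ref{thm:bdarboux} as the local model and then globalizing by means of a transverse Poisson vector field.

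First I would show that $Z$ is a Poisson submanifold. In the $b$-Darboux coordinates the bivector reads $\Pi=\sum_{i=1}^{n-1}\partial_{x_i}\wedge\partial_{y_i}+z\,\partial_z\wedge\partial_t$, so along $\{z=0\}$ the singular term drops out and $\Pi|_Z=\sum_i\partial_{x_i}\wedge\partial_{y_i}$ is everywhere tangent to $Z$. Intrinsically, transversality of $p\mapsto(\Pi(p))^n$ to the zero section in Definition \ref{definition:firstb} forces the rank of $\Pi$ to drop by exactly two along $Z$, so the symplectic leaves meeting $Z$ have dimension $2n-2$ and are contained in $Z$. Hence $Z$ is a union of symplectic leaves, i.e.\ a Poisson submanifold, and the induced structure has constant rank $2n-2$; thus it is regular with a codimension-one symplectic foliation $\mathcal F$.

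Next I would produce a Poisson vector field on $Z$ transverse to $\mathcal F$. The natural candidate is a representative of the modular vector field: choosing a local volume $\Omega=dx_1\wedge dy_1\wedge\cdots\wedge dz\wedge dt$, one computes $\operatorname{div}_\Omega X_t=\mp1$ while $\operatorname{div}_\Omega$ of the remaining coordinate Hamiltonian fields vanishes, so the modular vector field equals $\mp\partial_t$ — nowhere vanishing and transverse to the leaves $\{t=\mathrm{const}\}$. Since modular vector fields are Poisson vector fields, their flow $\phi_s$ preserves $\Pi$ and therefore carries symplectic leaves to symplectic leaves by Poisson diffeomorphisms, that is, by symplectomorphisms; because the flow is transverse to $\mathcal F$, any two leaves in a connected family are joined by some $\phi_s$, yielding the desired symplectomorphism. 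The point requiring care is the global existence, nonvanishing, and transversality of such a field $v$: the modular vector field depends on the chosen volume only up to a Hamiltonian field, so only its class matters, and one must check that the $b$-structure renders $v$ globally transverse to $\mathcal F$, which is exactly what the chart-by-chart normal-form computation guarantees.

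Finally, for compact $Z$ I would upgrade this to a mapping-torus statement. The foliation $\mathcal F$ is the kernel of a closed nowhere-vanishing one-form $\theta$ (the residue one-form of $\omega$ along $Z$, or simply the form dual to $v$ normalized by $\theta(v)=1$); then $L_v\theta=d\,\iota_v\theta=0$, so the complete flow of $v$ preserves $\theta$ and permutes the leaves. Viewing $v$ as an Ehresmann connection transverse to $\mathcal F$, completeness of its flow on the compact manifold $Z$ identifies $\mathcal F$ with a fiber bundle over the leaf space, a compact connected one-manifold without boundary, hence $S^1$; the time-$T$ return map $\phi=\phi_T$ on a chosen leaf $L$ is a symplectomorphism because $v$ is Poisson, and $Z\cong(L\times[0,T])/((x,0)\sim(\phi(x),T))$ is the asserted mapping torus. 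The main obstacle is precisely this last step: a priori the leaves of $\mathcal F$ could be dense or non-compact, so the crux is to use the transverse \emph{symmetry} $v$ — not merely the closed defining one-form — to force the leaf space to be a circle and the monodromy to be a symplectomorphism, thereby turning a Tischler-type fibration into a genuine mapping torus with symplectic monodromy.
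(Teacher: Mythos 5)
Your argument for the first two assertions is correct and is essentially the paper's own route: the $b$-Darboux form exhibits $Z$ as a regular corank-two Poisson submanifold, the modular vector field computes to $\mp\frac{\partial}{\partial t}$ in those coordinates, and, being a Poisson vector field transverse to the leaves, its flow carries leaves to leaves symplectomorphically; an open--closed argument on the saturated set of leaves symplectomorphic to a fixed one (rather than literally joining two given leaves by a single $\phi_s$) finishes that part.

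The mapping-torus step, however, has a genuine gap. The claim that completeness of the transverse flow on compact $Z$ identifies $\mathcal{F}$ with a fibre bundle over a leaf space diffeomorphic to $S^1$ is false: a complete flow transverse to a codimension-one foliation and preserving it does not force the leaf space to be a manifold. Concretely, on $\T^3\times(-\varepsilon,\varepsilon)$ with coordinates $(\theta_1,\theta_2,\theta_3,z)$, the closed $b$-form $\omega=\frac{dz}{z}\wedge(d\theta_1-\lambda\,d\theta_2)+d\theta_2\wedge d\theta_3$ with $\lambda\notin\Q$ is $b$-symplectic with compact critical hypersurface $Z=\T^3$; the induced symplectic foliation on $Z$ is spanned by $\lambda\frac{\partial}{\partial\theta_1}+\frac{\partial}{\partial\theta_2}$ and $\frac{\partial}{\partial\theta_3}$, so its leaves are dense non-compact cylinders, while the modular vector field is exactly $\frac{\partial}{\partial\theta_1}$ --- complete, Poisson and transverse. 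Here $Z$ is not the mapping torus of any of its leaves, so no argument using only ``compact $Z$ plus complete transverse Poisson field'' can succeed. The missing ingredient, which is what the argument behind \cite{Guillemin2011} actually uses (and which is available in the situations where this paper applies the theorem), is a \emph{compact} symplectic leaf $\SL$. Granted one, the saturation $\bigcup_{s}\phi_s(\SL)$ is open and closed, hence all of $Z$; if $\phi_s(\SL)\cap\SL=\emptyset$ for all $s\neq0$ then $(x,s)\mapsto\phi_s(x)$ would be a homeomorphism $\SL\times\R\to Z$, contradicting compactness of $Z$, so $\{s:\phi_s(\SL)=\SL\}=k\Z$ for some $k>0$ and $(x,s)\mapsto\phi_s(x)$ descends to the identification $Z\cong(\SL\times[0,k])/_{(x,0)\sim(\phi_k(x),k)}$. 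You should either add this compact-leaf input explicitly or explain where it comes from in the setting at hand.
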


In view of the result above, {\bf from now on we will assume that $Z$ is  compact}.

Grosso modo, the way this theorem was proved was using a particular vector field transverse to the symplectic foliation preserving the Poisson structure and flowing along it to get the symplectomorphism. The existence of such a vector field is a particular feature of this class of Poisson manifolds.

 For a given volume form $\Omega$ on a Poisson manifold $M$ the associated {\bf modular vector field} $u_{\mod}^\Omega$ is defined as the following derivation:
 \[
C^\infty(M) \to \R : \, f\mapsto \frac{\SL_{X_f}\Omega}{\Omega}.
\]
It can be shown (see for instance \cite{Weinstein2}) that this is indeed a derivation and, moreover, a Poisson vector field. Furthermore, for different choices of volume form $\Omega$, the resulting vector fields only differ by a Hamiltonian vector field.

 In \cite{Guillemin2011} it was shown that $Z$ is the mapping torus of any of its symplectic leaves $\SL$ by the flow of any choice of modular vector field $u$:
\[
 Z = (\SL \times [0,k])/_{(x,0)\sim (\phi(x),k)},
\]
where $k$ is a certain positive real number and $\phi$ is the time-$k$ flow of $u$.
In the transverse direction to the symplectic leaves, all the modular vector fields flow with the same speed. This allows the following definition:
\begin{definition}\label{def:modperiod} Taking any modular vector field $u_{\mod}^\Omega$, the {\bf modular period} of $Z$ is the number $k$ such that $Z$ is the mapping torus
$$ Z = (\SL \times [0,k])/_{(x,0)\sim (\phi(x),k)},$$
and the time-$t$ flow of  $u_{\mod}^\Omega$ is translation by $t$ in the $[0, k]$ factor above.
\end{definition}

\subsubsection{ $b$-Poisson manifolds in the language of forms}
As it was done in \cite{Guillemin2012} we recall how \emph{forms with poles} can be introduced in a formal way for $b$-Poisson manifolds.
The idea is the following:
We  develop a concept which allows to extend the symplectic structure from $M\backslash Z$ to the whole manifold $M$. This singular form will be called a ``$b$-symplectic" form on $M$.

Let us first set the basic language that will be used: A \textbf{$b$-manifold} is a pair $(M^N,Z)$ of an oriented manifold $M$ and an oriented hypersurface $Z\subset M$. A \textbf{$b$-vector field} on a $b$-manifold $(M,Z)$ is a vector field which is tangent to $Z$ at every point $p\in Z$.
If $x$ is a local defining function for $Z$ on some open set $U\subset M$ and $(x,y_1,\ldots,y_{N-1})$ is a chart on $U$, then the set of $b$-vector fields on $U$ is a free $C^\infty(M)$-module with basis
$$(x \frbd{x}, \frbd{y_1},\ldots, \frbd{y_N}). $$
 By virtue of the Serre-Swan theorem\footnote{
R. \ Swan,   \emph{Vector Bundles and Projective Modules}, Transactions of the American Mathematical Society 105, (2), 264--277, (1962).} there exists a vector bundle associated to this module. This vector bundle is called the \textbf{$b$-tangent bundle} and denote it $^b TM$.

 The \textbf{$b$-cotangent bundle}  $^b T^*M$  of $M$ is defined to be the vector bundle dual to $^b TM$. Let us now describe the set of $b$-forms as sections of powers of this bundle.

For each $k>0$, let $^b\Omega^k(M)$ denote the space of \textbf{$b$-de Rham $k$-forms}, i.e., sections of the vector bundle $\Lambda^k(^b T^*M)$. The usual space of de Rham $k$-forms sits inside this space in a natural way; for $f$ a defining function of $Z$ every $b$-de Rham $k$-form can be written as
\begin{equation}\label{eq:bDeRham}
\omega=\alpha\wedge\frac{df}{f}+\beta, \text{ with } \alpha\in\Omega^{k-1}(M) \text{ and } \beta\in\Omega^k(M).
\end{equation}

The decomposition given by formula \eqref{eq:bDeRham} enables us to extend the exterior $d$ operator to $^b\Omega(M)$ by setting
$$d\omega=d\alpha\wedge\frac{df}{f}+d\beta.$$
The right hand side is well defined and agrees with the usual exterior $d$ operator on $M\setminus Z$ and also extends smoothly over $M$ as a section of $\Lambda^{k+1}(^b T^*M)$. Note that $d^2=0$, which allows us to define the complex of $b$-forms, the $b$-de Rham complex.

 The cohomology associated to this complex is called \textbf{$b$-cohomology} and it is denoted by \textbf{$^b H^*(M)$}.

 A special class of closed $2$-forms of this complex are $b$-symplectic forms as defined in \cite{Guillemin2012},
 \begin{definition} Let $(M^{2n},Z)$ be a $b$-manifold and $\omega\in\,^b\Omega^2(M)$ a closed $b$-form. We say that $\omega$ is \textbf{$b$-symplectic} if $\omega_p$ is of maximal rank as an element of $\Lambda^2(\,^b T_p^* M)$ for all $p\in M$.
\end{definition}

\begin{remark}Instead of working with $b$-Poisson structures we can dualize them and work with $b$-forms. In that sense, a $b$-symplectic form is just a symplectic form modeled over a different Lie algebroid (the $b$-cotangent bundle instead of the cotangent bundle).
\end{remark}

 In order for the $b$-complex to admit a Poincar\'{e} lemma, it is convenient to enlarge the set of smooth functions and consider the set of \textbf{$b$-functions} $^b C^\infty(M)$, which consists of functions with values in $\R \cup \{\infty\}$ of the form
$$c\,\textrm{log}|f| + g,$$
 where $c \in \mathbb{R}$, $f$ is a defining function for $Z$, and $g$ is a smooth function. For the sake of simplicity, from now on we identify $\R$ with the completion $\R \cup \{\infty\}$. The differential operator $d$ can be defined on this space in the obvious way:
$d(c\,\textrm{log}|f| + g):= \frac{c \, df}{f} + d g \in\, ^b\Omega^1(M),$
where $d g$ is the standard de Rham derivative.

Once the differential of a $b$-form has been defined, the Lie derivative of $b$-forms can be defined via the {\bf Cartan formula}:
 \begin{equation}\label{bcartan}
  \SL_X \omega = \iota_X (d \omega) + d (\iota_X \omega) \in ^b\Omega^k(M),
 \end{equation} where $\omega\in ^b\Omega^k(M)$ and $X$ is a $b$-vector field.

The following theorem shows how the $b$-cohomology can be related to De Rham cohomology:

\begin{theorem}[{\bf{Mazzeo-Melrose}}]\label{thm:mazzeomelrose}
The $b$-cohomology groups of $M^{2n}$ satisfy
$$^b H^*(M)\cong H^*(M)\oplus H^{*-1}(Z).$$
\end{theorem}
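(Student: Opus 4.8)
The plan is to realise $^{b}H^{*}(M)$ as the cohomology of the middle term of a short exact sequence of complexes whose outer terms compute $H^{*}(M)$ and $H^{*-1}(Z)$, and then to split the resulting long exact sequence. The geometric input is the decomposition \eqref{eq:bDeRham}: every $\omega\in{}^{b}\Omega^{k}(M)$ is $\omega=\alpha\wedge\frac{df}{f}+\beta$ with $\alpha\in\Omega^{k-1}(M)$ and $\beta\in\Omega^{k}(M)$. I would define the \emph{residue} map
$$\operatorname{Res}\colon{}^{b}\Omega^{k}(M)\to\Omega^{k-1}(Z),\qquad \omega\mapsto\iota_{Z}^{*}\alpha,$$
where $\iota_{Z}\colon Z\inj M$ is the inclusion, and show that it fits into
$$0\to\Omega^{*}(M)\to{}^{b}\Omega^{*}(M)\xrightarrow{\ \operatorname{Res}\ }\Omega^{*-1}(Z)\to0.$$

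The first task is to check that $\operatorname{Res}$ is well defined, i.e. independent of the choices entering \eqref{eq:bDeRham}. There are two such choices. Changing the defining function from $f$ to $hf$ with $h>0$ smooth replaces $\frac{df}{f}$ by $\frac{d(hf)}{hf}=\frac{df}{f}+\frac{dh}{h}$, and since $\frac{dh}{h}$ is an honest smooth form the residue part $\alpha$ is unchanged while only $\beta$ is modified. For fixed $f$, two decompositions differ by $(\alpha-\alpha')\wedge\frac{df}{f}\in\Omega^{k}(M)$, which forces $\alpha-\alpha'$ to be divisible by $f$ and hence $\iota_{Z}^{*}(\alpha-\alpha')=0$. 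Because $d\omega=d\alpha\wedge\frac{df}{f}+d\beta$, the map commutes with $d$ and is thus a chain map. Exactness is then routine: $\operatorname{Res}$ is surjective because any $\mu\in\Omega^{k-1}(Z)$ extends to some $\tilde\mu\in\Omega^{k-1}(M)$ with $\operatorname{Res}\bigl(\tilde\mu\wedge\frac{df}{f}\bigr)=\mu$; and $\operatorname{Res}(\omega)=0$ forces $\alpha=f\gamma$ near $Z$ by Hadamard's lemma, whence $\alpha\wedge\frac{df}{f}=\gamma\wedge df$ is smooth and $\omega\in\Omega^{k}(M)$, identifying the kernel with the smooth forms.

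The short exact sequence yields the long exact sequence
$$\cdots\to H^{k}(M)\to{}^{b}H^{k}(M)\xrightarrow{\ \operatorname{Res}\ }H^{k-1}(Z)\xrightarrow{\ \delta\ }H^{k+1}(M)\to\cdots,$$
and it remains to split it, for which it suffices to produce a right inverse of $\operatorname{Res}$ on cohomology. I would fix a tubular neighbourhood $Z\times(-\e,\e)\subset M$ with projection $p$ and normal coordinate $t$, so that $t$ is a local defining function for $Z$, and choose a cutoff $\rho=\rho(t)$ depending only on $t$ with $\rho\equiv1$ near $t=0$. For a closed $\mu\in\Omega^{k-1}(Z)$ set
$$\sigma(\mu):=\rho(t)\,p^{*}\mu\wedge\frac{dt}{t}.$$
This is a genuine $b$-form on all of $M$, and it is $b$-closed: one has $d(p^{*}\mu)=p^{*}d\mu=0$, $d\frac{dt}{t}=0$, and $d\rho\wedge\frac{dt}{t}=\rho'(t)\,dt\wedge\frac{dt}{t}=0$ precisely because $\rho$ depends on $t$ alone. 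Since $\operatorname{Res}(\sigma(\mu))=\rho(0)\,\mu=\mu$, the assignment $[\mu]\mapsto[\sigma(\mu)]$ splits $\operatorname{Res}$ on cohomology, the connecting map $\delta$ vanishes, and the sequence degenerates into the claimed isomorphism $^{b}H^{k}(M)\cong H^{k}(M)\oplus H^{k-1}(Z)$.

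The main obstacle is less the homological bookkeeping than the well-definedness and exactness of $\operatorname{Res}$: both rest on the division lemma that a form whose pullback to $Z$ vanishes is divisible by the defining function, and on the identity $\frac{d(hf)}{hf}=\frac{df}{f}+\frac{dh}{h}$ that renders the residue insensitive to the choice of defining function. Once these are secured, the cutoff depending only on the normal coordinate is the single device needed to manufacture closed $b$-forms with prescribed residue and thereby split the sequence.
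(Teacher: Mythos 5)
The paper states this theorem without proof, deferring to the literature; your argument is correct and is essentially the standard proof (the one given in \cite{Guillemin2012}): the residue short exact sequence $0\to\Omega^*(M)\to{}^b\Omega^*(M)\xrightarrow{\operatorname{Res}}\Omega^{*-1}(Z)\to 0$, split at the chain level by $\mu\mapsto\rho(t)\,p^*\mu\wedge\frac{dt}{t}$. The only imprecision is the claim that smoothness of $(\alpha-\alpha')\wedge\frac{df}{f}$ forces $\alpha-\alpha'$ to be divisible by $f$ --- it only forces this for the component of $\alpha-\alpha'$ not containing $df$ --- but since $\iota_Z^*\,df=0$ the needed conclusion $\iota_Z^*(\alpha-\alpha')=0$ holds regardless, so nothing breaks.
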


 In \cite{Guillemin2012}, it was proved that $b$-cohomology of a $b$-symplectic manifold is indeed isomorphic to its Poisson cohomology. The modular class  of the vector field transverse to the symplectic foliation in $Z$ can be identified with the component in $H^{1}(Z)$ defined by the degree 2 class of the $b$-symplectic form under the Mazzeo-Melrose isomorphism of Theorem \ref{thm:mazzeomelrose} and it is a Poisson invariant of the manifold. Many classification theorems (like the ones for toric actions in \cite{Guillemin2013}) can be interpreted in the light of this theorem.

\subsubsection{Hamiltonian $\T^n$-actions on $b$-symplectic manifolds}\label{sec:tractions}

Hamiltonian $\T^n$-actions will play a key role in the definition of the cotangent model for $b$-symplectic manifolds.  These actions were studied in \cite{Guillemin2013}. In this section we recall some definitions and results.

Hamiltonian vector fields are well-defined for smooth functions via the standard Poisson formalism. In \cite{Guillemin2013} we extended this concept to $b$-functions:

\begin{definition}[{\bf $b$-Hamiltonian vector field}] Let $(M,\omega)$  be a $b$-symplectic manifold. Given a $b$-function $H\in ^b \! C^\infty(M)$ we denote by $X_H$ the (smooth) vector field satisfying $\iota_{X_H} \omega = -dH.$
\end{definition}

Obviously, the flow of a $b$-Hamiltonian vector field preserves the $b$-symplectic form and hence the Poisson structure, so $b$-Hamiltonian vector fields are in particular Poisson vector fields.

\begin{definition}\label{def:bhamaction} An action of $\mathbb{T}^r$ on a $b$-symplectic manifold $(M^{2n},\omega)$ is \textbf{Hamiltonian} if for all $X,Y\in\mathfrak{t}$:
\begin{itemize}
\item the $b$-one-form $\iota_{X^\#}\omega$ is exact;
\item $\omega(X^\#, Y^\#)=0$.
\end{itemize}
\end{definition}
Here, $\mathfrak{t}$ denotes the Lie algebra of $\T^r$ and $X^\#$ is the fundamental vector field of $X$. The primitive of the exact $b$-one-form $\iota_{X^\#}\omega$ is defined via the moment map $\mu:M\to \mathfrak{t}^\ast$: $\iota_{X^\#} \omega|_p = d \langle \mu(p),X \rangle.$ In other words, $X^\#$ is the $b$-Hamiltonian vector field of $-\langle \mu(p),X \rangle$.

When a $b$-function $f\in C^\infty(M)$ is expressed as $c\log|y|+g$ locally near some point of a component $Z'$ of $Z$, the number $c_{Z'}(f) := c \in \mathbb{R}$ is uniquely determined by $f$, even though the functions $y$ and $g$ are not and it is a Poisson invariant.
\begin{definition}[Modular weight] Given a Hamiltonian $\mathbb{T}^r$-action on a $b$-symplectic manifold, the \textbf{modular weight} of a connected component $Z'$ of $Z$ is the map
$v_{Z'}:\mathfrak{t}\to \mathbb{R}$
given by $v_{Z'}(X)=c_{Z'}(H_X)$. This map is linear and therefore we can regard it as an element of the dual of the Lie algebra $v_{Z'}\in\mathfrak{t}^*$. We denote the kernel of $v_{Z'}$ by $\mathfrak{t}_{Z'} \subset \mathfrak{t}$.
\end{definition}

\subsection{Integrable systems  on symplectic manifolds} Let $(M^{2n},\omega)$ be a symplectic manifold. An \textbf{integrable system} is given by $n$ functions $f_1,\ldots,f_n$ in involution with respect to the Poisson bracket associated to the symplectic form $\omega$ and which are functionally independent on a dense set. Recall that the Poisson bracket associated to $\omega$ is defined via
$$\{f,g\}:=\omega(X_f,X_g),\qquad f,g\in C^\infty(M),$$
where for a function $f$ the vector field $X_f$ is the {\bf Hamiltonian vector field} of $f$ defined by $\iota_{X_f} \omega = -df$.

The expression \emph{integrable} refers to integrability of the system of differential equations associated to a function $H$ which can be chosen as one of the commuting functions: Integrability of the system in the sense described above (also called Liouville integrability) is related to actual integration of the system by quadratures \cite{liouville}.

The local structure of a symplectic manifold is described by the Darboux theorem. In the context of integrable systems the Darboux-Carath\'{e}odory theorem states that we can find a special Darboux chart in which half the coordinate functions are the integrals of the system (locally around a point where the integrals are independent).

The theorem of Liouville-Mineur-Arnold goes one step further and establishes a {\it semi-local} result in a neighbourhood of a compact level set (``Liouville torus'') of the integrable system:

\begin{theorem}{\bf(Liouville-Mineur-Arnold)}\label{thm:liouvillearnold} Let $(M^{2n},\omega)$ be a symplectic manifold. Let $F=(f_1,\ldots,f_n)$ be an $n$-tuple of functions on $M$ which are functionally independent (i.e. $df_1\wedge\dots\wedge df_n\neq 0$) on a dense set and which are pairwise in involution. Assume that $m$ is a regular point\footnote{i.e. the differentials $df_i$ are independent at $m$.} of $F$ and that the level set of $F$ through $m$, which we denote by $\F_m$, is compact and connected.

Then $\F_m$ is a torus and on a neighbourhood $U$ of $\F_m$ there exist ${\R} $-valued smooth
  functions $(p_1,\dots, p_n )$ and $ {\R}/{\Z}$-valued smooth functions
  $({\theta_1},\dots,{\theta_n})$ such that:
  \begin{enumerate}
    \item The functions $(\theta_1,\dots,\theta_n,p_1,\dots,p_n )$ define a diffeomorphism
            $U\simeq\T^n\times B^{n}$.
    \item The symplectic structure can be written in terms of these coordinates as
    \begin{equation*}
       \omega=\sum_{i=1}^n d \theta_i \wedge dp_i.
    \end{equation*}
    \item The leaves of the surjective submersion $F=(f_1,\dots,f_{s})$ are given by the projection onto the
      second component $\T^n \times B^{n}$, in particular, the functions $f_1,\dots,f_s$ depend only on
      $p_1,\dots,p_n$.
  \end{enumerate}
The coordinates $p_i$ are called action coordinates; the coordinates $\theta_i$ are called angle coordinates.
\end{theorem}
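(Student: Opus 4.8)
The plan is to follow the classical Arnold-Liouville argument in four stages: first identify the level set as a torus, then trivialize the fibration in a neighbourhood, then produce the action variables as period integrals, and finally construct the conjugate angle variables that normalize $\omega$.

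First I would exploit involutivity. Since $\{f_i,f_j\}=0$, the Hamiltonian vector fields satisfy $X_{f_i}(f_j)=\{f_j,f_i\}=0$, so each $X_{f_i}$ is tangent to the common level sets of $F$; moreover $[X_{f_i},X_{f_j}]=X_{\{f_i,f_j\}}=0$, so they pairwise commute. Near the regular point $m$ the $df_i$ are independent, hence so are the $X_{f_i}$, and they span the tangent space of $\F_m$ at each point. Because $\omega(X_{f_i},X_{f_j})=\{f_i,f_j\}=0$ and $\dim\F_m=n=\tfrac12\dim M$, the level set $\F_m$, and every nearby level set, is Lagrangian. On the compact set $\F_m$ these vector fields are complete, so the joint flow defines a smooth $\R^n$-action on $\F_m$. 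This action is locally free (the $X_{f_i}$ are independent) and transitive on the connected $\F_m$, exhibiting $\F_m\cong\R^n/\Lambda$ for the isotropy lattice $\Lambda$; compactness forces $\Lambda$ to have full rank $n$, so $\F_m\cong\T^n$.

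Next I would trivialize. Because $F$ is a submersion near $\F_m$, after shrinking we obtain a fibration $F\colon U\to B$ over a ball $B\subset\R^n$ with compact connected Lagrangian fibers $\F_c$. Choosing a section $\sigma\colon B\to U$ and composing with the joint flow $\phi^t=\phi^{t_1}_{X_{f_1}}\circ\cdots\circ\phi^{t_n}_{X_{f_n}}$, the map $(t,c)\mapsto\phi^t(\sigma(c))$ yields a surjection $\R^n\times B\to U$ whose fiberwise isotropy is a smoothly varying lattice $\Lambda_c$. Selecting a smooth basis $\lambda_1(c),\ldots,\lambda_n(c)$ of $\Lambda_c$ and using it to normalize the flow times produces the diffeomorphism $U\simeq\T^n\times B^n$ of item (1) together with a first, not yet canonical, set of angle coordinates.

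Then I would construct the action variables. Since the fibers are Lagrangian and $B$ is contractible, the class of $\omega$ in $H^2(U)\cong H^2(\T^n)$ restricts to zero on each fiber and therefore vanishes, so $\omega=d\alpha$ on $U$ for some $\alpha\in\Omega^1(U)$. Let $\gamma_1(c),\ldots,\gamma_n(c)$ be the $1$-cycles on $\F_c$ determined by the lattice basis and set
\begin{equation*}
p_i(c)=\oint_{\gamma_i(c)}\alpha ,
\end{equation*}
normalized so that the corresponding flow is $\R/\Z$-periodic. Because $\alpha$ restricts to a closed form on each Lagrangian fiber, the integral depends only on the homology class of $\gamma_i(c)$, so the $p_i$ are smooth functions of $c$ alone, i.e. functions of $f_1,\ldots,f_n$. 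The crucial lemma is that the Hamiltonian vector field $X_{p_i}$ generates exactly the periodic flow around $\gamma_i$ — equivalently, that the period lattice is integral affine with the $dp_i$ as a basis. I would prove this by differentiating the period integrals along the fibration and comparing with the infinitesimal generators of the joint flow; the same computation shows that the $dp_i$ are independent, so $(p_1,\ldots,p_n)$ is an admissible replacement for $(f_1,\ldots,f_n)$, giving item (3).

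Finally, with the $p_i$ in hand, the vector fields $X_{p_i}$ define a free Hamiltonian $\T^n$-action on $U$ whose orbits are the Lagrangian tori; choosing a Lagrangian section and flowing by this action produces angle coordinates $\theta_i\in\R/\Z$ conjugate to the $p_i$. A direct verification, or a Moser-type normalization, then gives $\omega=\sum_{i=1}^n d\theta_i\wedge dp_i$, which is item (2). The hard part will be the crucial lemma of the third stage: establishing that the period-integral functions $p_i$ have periodic Hamiltonian flows (the integrality of the period lattice) and that the conjugate angles can be chosen so that no cross terms $dp_i\wedge dp_j$ or $d\theta_i\wedge d\theta_j$ survive. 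Everything else is either standard flow theory or elementary cohomology.
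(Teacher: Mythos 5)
Your proposal is correct and is precisely the classical route the paper points to: the paper does not prove Theorem~\ref{thm:liouvillearnold} itself but defers to \cite{arnold} and \cite{duistermaat}, and its accompanying remark sketches exactly the two ingredients you combine --- Arnold's period integrals $p_i=\oint_{\gamma_i}\alpha$ for the action variables and Duistermaat's uniformization of periods producing a Hamiltonian $\T^n$-action for the angle variables. Your identification of the periodicity/integrality of the period lattice as the crucial lemma is also where the real work lies in both references, so there is no substantive divergence to report.
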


\begin{remark}In physics, usually one of the integrals $f_i$ of Theorem \ref{thm:liouvillearnold} is the energy $H$, e.g. $f_1=H$, and motion is given by the flow of the Hamiltonian vector field of $H$. Statement (3) in Theorem \ref{thm:liouvillearnold} implies that $H$ is constant along the level sets of the functions $f_i$. Moreover, since $df_i(X_H)=\{f_i,H\}=0$, the vector field $X_H$ is tangent to the level sets. More precisely, in the action-angle coordinate chart, the flow of $X_H$ is linear on the invariant tori.
\end{remark}

\begin{remark}
The proof of this theorem can be found for instance in \cite{arnold} or in \cite{duistermaat}. Arnold's proof uses the so-called period integrals to define the action coordinates, $$p_i=\int_{\gamma_i} \alpha$$
\noindent with $\alpha$ a Liouville one-form for $\omega$ (i.e., $d\alpha=\omega$) and $\gamma_i$ a cycle of a Liouville torus. This formula can already be found in \cite{mineur}.
The proof in \cite{duistermaat} actively uses the existence of a Hamiltonian $\mathbb T^n$-action which is tangent to the fibers defined by the collection of first integrals $F=(f_1,\dots, f_n)$.
The proof by Duistermaat follows several steps: The first one is setting the topology of the fibration, then constructing a Hamiltonian $\mathbb T^n$-action (for which a uniformization of periods of the Hamiltonian vector fields is needed) and then using the techniques native to symplectic geometry such as the invariance of the integrable system by this action. The upshot of Duistermaat's proof is that it can be adapted to more general settings such as the case of regular Poisson manifolds.

\end{remark}
Many important examples of dynamical systems in physics are integrable. A first class of examples  is given by any $2$-dimensional Hamiltonian system with $dH \neq 0$ on a dense set, e.g. the mathematical pendulum. Another example is the two-body problem, i.e. a system consisting of two bodies which interact through a potential $V$ that depends only on their distance. This system has  configuration space $\R^3 \times \R^3$ and Hamiltonian $H( {q_1},  {q_2},  {p_1},  {p_2})= \frac{{p_1}}{2 m_1} + \frac{{p_2}}{2 m_2} + V (|{q_1}-{q_2}|)$ is integrable with first integrals the energy $H$, the total momentum ${p_1}+{p_2}$ and the total angular momentum ${q_1}\times {p_1} + {q_2}\times {p_2}$. We also mention a  rigid body fixed at its centre of gravity, which is a system with configuration space $SO(3)$. Its integrals are the energy and the norm of the total angular momentum. Many more examples appear in classical mechanics.

\subsection{ Integrable systems on Poisson manifolds}
The results in this section are contained in \cite{Laurent-Gengoux2010}. The main idea is to extend the Liouville-Mineur-Arnold theorem to the context of Poisson manifolds. Such an extension can be performed in a neighbourhood of regular Liouville tori of Poisson manifolds.

\begin{definition}Let $(M,\Pi)$ be a Poisson manifold of dimension $n$, where $\Pi$ has maximal rank $2r$. An \textbf{integrable system} on $M$ is a set of $s=n-r$  functions $F=(f_1,\dots,f_s)$ which pairwise Poisson commute  and whose differentials $df_i$ are linearly independent on a dense set.
\end{definition}

We denote the non-empty subset of $M$ where the differentials $d f_1,\dots,d f_s$ are independent by $\SU_F$; the set where $\Pi$ attains its maximal rank $2r$ is denoted $M_{r}$. A compact invariant subset of $\SU_F\cap M_{r}$ is called regular Liouville torus.  In \cite{Laurent-Gengoux2010} the following  action-angle theorem  in a neighbourhood of a regular Liouville torus is proved.
\begin{theorem}[{\bf Laurent-Miranda-Vanhaecke, \cite{Laurent-Gengoux2010}}]\label{thm:aa_poisson}Let $(M,\Pi)$ be a Poisson manifold of dimension $n$ and rank~$2r$ and let $F=(f_1,\dots,f_s)$  be an integrable system on $M$. Let $m\in \SU_F \cap M_r$ and suppose that the integral manifold $\SF_m$ of $X_{f_1},\dots,X_{f_s}$, passing through $m$, is compact. Then $\SF_m$ is a torus (Liouville torus) and on a neighbourhood $U$ of $\SF_m$ we can define coordinates $(\theta_1,\dots,\theta_r,p_1,\dots,p_{s}): U \to  \T^r \times B^s$
    such that
      $$
        \Pi=\sum_{i=1}^r\pp{}{\theta_i}\we\pp{}{p_i}
      $$
where the coordinates $p_1,\dots,p_s$ depend only on $F$.
The functions $\theta_1,\dots,\theta_r$ are called {angle coordinates}, the functions $p_1,$
$\dots,p_r$ are called {action coordinates} and the remaining functions
$p_{r+1},\dots,p_{s}$ are called {transverse coordinates}.
\end{theorem}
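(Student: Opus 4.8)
The plan is to adapt Duistermaat's proof \cite{duistermaat} of the Liouville-Mineur-Arnold theorem (Theorem \ref{thm:liouvillearnold}) to the Poisson setting, exploiting the fact that near $m$ the bivector $\Pi$ is \emph{regular} to reduce the desired normal form to a leaf-wise version of the symplectic theorem together with a trivial transverse (Casimir) factor.

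First I would pin down the geometry of $\SF_m$. Since the $f_i$ pairwise Poisson commute we have $X_{f_i}(f_j)=\{f_i,f_j\}=0$, so each $X_{f_i}$ is tangent to the common level set $\{F=F(m)\}$ and, being Hamiltonian, to the symplectic leaf $S$ through $m$; moreover $[X_{f_i},X_{f_j}]=X_{\{f_i,f_j\}}=0$, so these vector fields commute. A dimension count shows they span exactly $\ker dF_m$: writing $V=\mathrm{span}\{X_{f_i}(m)\}\subseteq T_mS$, involutivity makes $V$ isotropic for the leaf symplectic form, so $\dim V\le r$, while its symplectic orthogonal $V^{\omega}=\ker dF_m\cap T_mS$ has dimension $2r-\dim V\ge r$; since $\ker dF_m$ has dimension $r$ (as $m\in\SU_F$), both inequalities are equalities, $\ker dF_m\subseteq T_mS$, and $\SF_m$ is an $r$-dimensional Lagrangian submanifold of $S$. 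Completeness of the commuting $X_{f_i}$ on the compact connected $\SF_m$ then yields a transitive $\R^{s}$-action, whence $\SF_m\cong\R^{s}/\Gamma_m$, where the isotropy group $\Gamma_m$ is the sum of an $(n-2r)$-dimensional vector subspace (the kernel of the infinitesimal action) and a rank-$r$ period lattice; hence $\SF_m\cong\T^{r}$.

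Next I would build the coordinates. Because $m\in M_r$, the structure $\Pi$ has constant rank $2r$ on a neighbourhood $U$, so by Weinstein's splitting theorem \cite{weinstein} the symplectic foliation is regular there and admits $n-2r$ transverse Casimir functions. I would take $n-2r$ of the integrals whose differentials are conormal to the leaves as the transverse coordinates $p_{r+1},\dots,p_s$; along these directions $\Pi$ has no component. For the torus directions I would uniformize the period lattice: as the base point varies over $U$ the lattice in $\R^{s}$ varies smoothly, and a smooth choice of basis produces a free $\T^r$-action on $U$ tangent to the fibres of $F$. Its leaf-wise momentum map provides the action coordinates $p_1,\dots,p_r$, obtained as period integrals of the leaf symplectic form and therefore depending only on $F$; a section transverse to the fibres together with the flow of the $\T^r$-action then furnishes the angle coordinates $\theta_1,\dots,\theta_r$. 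Finally, on each leaf the symplectic action-angle theorem gives $\omega_S=\sum_{i=1}^r d\theta_i\wedge dp_i$, and dualizing this leaf-wise identity together with the triviality in the transverse Casimir directions yields the claimed normal form $\Pi=\sum_{i=1}^r \pp{}{\theta_i}\we\pp{}{p_i}$.

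I expect the main obstacle to be the uniformization step: showing that the period lattice depends smoothly on the base point and can be trivialized simultaneously over a full neighbourhood of the torus, now in the presence of the transverse Casimir parameters and without an ambient symplectic form in which to run Arnold's period-integral argument directly. Equivalently, the delicate point is to make the leaf-wise action-angle coordinates depend smoothly on the transverse coordinates and assemble into a single chart in which $\Pi$ takes the exact product form; this is precisely where the Poisson proof of \cite{Laurent-Gengoux2010} must go beyond the symplectic case.
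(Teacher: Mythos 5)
The paper itself does not prove Theorem \ref{thm:aa_poisson}: it quotes it from \cite{Laurent-Gengoux2010}, and the only indication it gives of the argument is that it adapts Duistermaat's method, the key ingredient being the control of Poisson cohomology in a neighbourhood of the Liouville torus so as to produce a Hamiltonian $\T^r$-action whose orbits are the fibres. Your sketch follows exactly that strategy (topology of the fibre, uniformization of periods, torus action tangent to the fibres, transverse Casimirs), and the linear-algebra part is sound: $\mathrm{span}\{X_{f_i}(m)\}=\ker dF_m$ is Lagrangian in the leaf, and the closed-subgroup argument gives $\SF_m\cong\T^r$. So in outline you are reproducing the proof the paper points to.

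Two points, however, need repair. First, you cannot in general ``take $n-2r$ of the integrals'' as transverse coordinates: none of the $f_i$ need be a Casimir. What is true is that at points of $\SU_F\cap M_r$ the map $\Pi^\#$ restricted to $\mathrm{span}\{df_1,\dots,df_s\}$ has $r$-dimensional image, hence its kernel is all of $\ker\Pi^\#$; combining this with the compactness and connectedness of the fibres of $F$, the local Casimirs furnished by the regular local structure are constant on fibres and therefore functions of $F$ --- the transverse coordinates must be built this way, not selected from the $f_i$. Second, the genuinely hard step is not the smooth dependence of the period lattice (a routine implicit-function-theorem argument) but the construction of the action functions $p_1,\dots,p_r$: one must prove that the uniformized $\T^r$-action is Hamiltonian with momentum depending only on $F$, and Arnold's formula $p_i=\int_{\gamma_i}\alpha$ is unavailable since a regular Poisson structure admits no Liouville primitive. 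Your phrase ``its leaf-wise momentum map provides the action coordinates'' is circular at precisely this point: the existence of a momentum map is what has to be established. In \cite{Laurent-Gengoux2010} this is done via a Poisson version of the Darboux--Carath\'{e}odory theorem together with the cohomological control that the present paper explicitly flags as the key ingredient; your sketch correctly locates the difficulty but does not supply that ingredient.
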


 A key ingredient in the proof of this theorem contained in \cite{Laurent-Gengoux2010} is the adaptation of Duistermaat's method to this context is  the control of the Poisson cohomology in a neighbourhood of a Liouville torus in order to define $\mathbb T^n$-Hamiltonian actions with orbits the fibers of the integrable system.


\subsection{Integrable systems on $b$-symplectic manifolds}

In  \cite{KMS} we introduced a definition of integrable systems for $b$-symplectic manifolds, where we allow the integrals to be $b$-functions. Such a ``$b$-integrable system'' on a $2n$-dimensional manifold consists of $n$ integrals, just as in the symplectic case. \begin{definition}\label{def:pbintegrable}  A  \textbf{$b$-integrable system} on a $2n$-dimensional $b$-symplectic manifold $(M^{2n},\omega)$ is a set of  $n$ pairwise Poisson commuting $b$-functions $F=(f_1,\ldots,f_{n-1},f_n)$   with
$df_1 \wedge \dots \wedge df_n\neq 0$  as a section of $\wedge^n  ({^b} T^*(M))$ on a dense subset of $M$ and on a dense subset of $Z$. A point in $M$ is {\bf regular} if the vector fields $X_{f_1}, \dots, X_{f_n}$ are linearly independent (as \emph{smooth} vector fields) at it. \end{definition}

Notice that if a point on $Z$ is regular, then at least one of the $f_i$ must be non-smooth there. On the set of regular points, the distribution given by $X_{f_1},\ldots, X_{f_n}$ defines a foliation $\SF$. We denote the integral manifold through a regular point $m\in M$ by $\SF_m$. If $\SF_m$ is compact, then it is an $n$-dimensional torus (also referred to as ``(standard) {\bf Liouville torus}''). Because the $X_{f_i}$ are b-vector fields and are therefore tangent to $Z$, any Liouville torus that intersects $Z$ actually lies inside $Z$.
Two ($b$-)integrable systems $F$ and $F'$ are called {\bf equivalent} if there is a map $\mu:\R^n \supset F(M) \to \R^n$ taking one system to the other: $F' = \mu \circ F$. We will not distinguish between equivalent integrable systems.\begin{remark} Near a regular point of $Z$, a $b$-integrable system on a $b$-symplectic manifold is equivalent to one of the type $F=(f_1,\ldots,f_{n-1},f_n)$, where $f_1,\ldots, f_{n-1}$ are $C^\infty$ functions and $f_n$ is a $b$-function. In fact, we may always assume that  $f_n = \log|t|$, where $t$ is a global defining function for $Z$.
\end{remark}

In analogy to the Liouville-Mineur-Arnold theorem, we have
\begin{theorem}[{\textbf{Kiesenhofer-Miranda-Scott \cite{KMS}}}]\label{thm:baa} Let  $\left(M, \omega, F \right)$
  be a $b$-integrable system with $F= (f_1, \dots, f_{n-1}, f_n = \log|t|)$, and let $m \in Z$ be a regular point for which the integral manifold containing $m$ is compact, i.e. a Liouville torus $\SF_m$. Then there exists an open neighbourhood $U$ of the torus $\SF_m$ and coordinates
  $$(\theta_1,\dots,\theta_n,p_1,\dots,p_{n}): U \to  \T^n \times B^n$$ such that
\begin{equation}
        \omega|_U =\sum_{i=1}^{n-1}  d\theta_i  \wedge dp_i + \frac{c}{p_n} d\theta_n \wedge dp_n,
\end{equation} where the coordinates $p_1,\dots,p_n$ depend only on $F$ and the number $c$ is the modular period of the component of $Z$ containing $m$.
\end{theorem}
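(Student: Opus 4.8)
The plan is to reduce the statement to the action-angle theorem on the critical hypersurface $Z$ and then extend the resulting coordinates transversally, using the $b$-Darboux normal form to generate the singular term. First I would record that, since the $X_{f_i}$ are $b$-vector fields and hence tangent to $Z$, the Liouville torus $\SF_m$ lies inside $Z$. As in the classical case the vector fields $X_{f_1},\dots,X_{f_n}$ commute (because $[X_{f_i},X_{f_j}]=X_{\{f_i,f_j\}}=0$), are independent along $\SF_m$, and are complete on the compact connected manifold $\SF_m$; they therefore integrate to a free transitive action of $\R^n/\Lambda$, so $\SF_m\cong\T^n$. The geometric point is that $X_{f_1},\dots,X_{f_{n-1}}$ are tangent to the symplectic leaves of $Z$ while the restriction of $X_{f_n}=X_{\log|t|}$ to $Z$ is, up to normalization, the modular vector field: in the $b$-Darboux coordinates of Theorem \ref{thm:bdarboux} one checks directly that $X_{\log|t|}$ points along the mapping-torus direction transverse to the leaves. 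Thus $\SF_m$ meets each leaf it touches in an $(n-1)$-torus and closes up after one modular period in the transverse direction.

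Next I would apply the Poisson action-angle theorem (Theorem \ref{thm:aa_poisson}) to the regular Poisson manifold $(Z,\Pi_Z)$, which has dimension $2n-1$ and rank $2n-2$. The smooth functions $f_1|_Z,\dots,f_{n-1}|_Z$ Poisson commute; completing them by a transverse Casimir defined near $\SF_m$ gives an integrable system of $s=n$ functions on $Z$. The theorem then furnishes, on a saturated neighbourhood of $\SF_m$ in $Z$, angle coordinates $\theta_1,\dots,\theta_{n-1}$, action coordinates $p_1,\dots,p_{n-1}$ depending only on $F$, and one transverse coordinate, with
\begin{equation*}
\Pi_Z=\sum_{i=1}^{n-1}\frac{\partial}{\partial\theta_i}\we\frac{\partial}{\partial p_i}.
\end{equation*}
The Casimir direction here is precisely the mapping-torus direction spanned by the modular vector field; I would normalize it to an $\R/\Z$-valued angle $\theta_n$, arranged so that the modular flow advances $\theta_n$ with period equal to the modular period $c$ of Definition \ref{def:modperiod}. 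This accounts for all $2n-1$ coordinates on $Z$ and for the smooth part $\sum_{i=1}^{n-1}d\theta_i\we dp_i$ of the desired form.

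It then remains to extend these coordinates to a full neighbourhood $U\subset M$ and to produce the singular term. I would introduce a transverse action coordinate $p_n$ with $Z=\{p_n=0\}$, taken to be a function of a defining function for $Z$ and hence, since $|t|=e^{f_n}$, depending only on $F$. Transporting the $Z$-normal form off $Z$ by the flow of $X_{f_n}$ and invoking the $b$-Darboux model forces the extra term to have the shape $\frac{c}{p_n}\,d\theta_n\we dp_n$: for a form of this type one computes $X_{\log|p_n|}=-\frac1c\,\frac{\partial}{\partial\theta_n}$, whose flow wraps the circle $\theta_n$ in time exactly $c$, matching the flow of $X_{\log|t|}$ and thereby pinning $c$ to the modular period. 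One then checks that the resulting $b$-form
\begin{equation*}
\omega=\sum_{i=1}^{n-1}d\theta_i\we dp_i+\frac{c}{p_n}\,d\theta_n\we dp_n
\end{equation*}
is closed, of maximal $b$-rank, and dualizes $\Pi_Z$ along $Z$, which identifies it with the given $b$-symplectic form in these coordinates.

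The main obstacle is the combined step of gluing the leafwise action-angle coordinates across the mapping torus while realizing the transverse modular circle as a genuine \emph{Hamiltonian} flow. In Duistermaat's language this is the uniformization of the period lattice of the commuting flow; the difficulty is that the transverse modular direction is a Poisson but not a Hamiltonian vector field on $Z$ (the modular class is exactly the obstruction), so it becomes Hamiltonian only after passing to the enlarged class of $b$-functions, with primitive $\log|t|$. This is where the Mazzeo-Melrose isomorphism $^bH^*(M)\cong H^*(M)\oplus H^{*-1}(Z)$ of Theorem \ref{thm:mazzeomelrose}, together with the identification of the modular class with the $H^1(Z)$-component of $[\omega]$, does the real work, and where the constant $c$ is forced to equal the modular period. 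By contrast, the leafwise steps and the closedness and independence verifications are routine consequences of Theorems \ref{thm:liouvillearnold} and \ref{thm:aa_poisson}.
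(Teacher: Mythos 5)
Your strategy---restrict to $Z$, apply Theorem \ref{thm:aa_poisson} there, then push the normal form off $Z$---is genuinely different from the proof in \cite{KMS} that the paper sketches (Duistermaat's scheme run directly on a full neighbourhood of $\SF_m$ in $M$: uniformization of the periods of the joint flow to build a $\T^n$-action tangent to the fibres, proof that this action is $b$-Hamiltonian with the modular class as the obstruction, then a $b$-Darboux--Carath\'eodory argument). As written, your version has two genuine gaps. First, Theorem \ref{thm:aa_poisson} does not apply on $Z$ in the way you invoke it: its Liouville tori are the integral manifolds of the Hamiltonian vector fields of all $s$ functions, i.e.\ the $(n-1)$-dimensional orbits of $X_{f_1},\dots,X_{f_{n-1}}$ inside a \emph{single} symplectic leaf, not the $n$-torus $\SF_m$, which is transverse to the leaves. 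Moreover the ``transverse Casimir'' you want does not exist as an $\R$-valued function on any saturated neighbourhood of $\SF_m$: since $\SF_m$ wraps once around the mapping-torus direction, the local leaf space near $\SF_m$ is a circle, so such a function is at best $S^1$-valued and the hypotheses of Theorem \ref{thm:aa_poisson} are not met. At best you obtain action--angle coordinates near each leafwise $(n-1)$-torus $\SF_m\cap\SL$ (whose compactness itself needs an argument: it uses that the leaves are closed and that the period lattice of $\SF_m\cong\R^n/\Lambda$ meets the leafwise hyperplane in a rank-$(n-1)$ subgroup), and you must then glue these charts around the mapping-torus circle. That gluing is exactly the uniformization-of-periods step you flag as ``the main obstacle'' but do not carry out; it is the content of the theorem, not a routine verification.

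Second, the transverse extension fails as described. $X_{f_n}=X_{\log|t|}$ is a $b$-vector field, hence tangent to $Z$; its flow moves points along $Z$ and cannot transport a normal form defined on $Z$ to a neighbourhood of $\SF_m$ in $M$. Invoking Theorem \ref{thm:bdarboux} does not repair this: the $b$-Darboux theorem is local at a point, is insensitive to the torus $\SF_m$ and to the integrable system $F$, and therefore can neither force the coefficient of $d\theta_n\we dp_n$ to be the constant $c/p_n$ on a whole neighbourhood of the torus nor guarantee that $p_1,\dots,p_n$ depend only on $F$. Your computation that $X_{\log|p_n|}=-\tfrac1c\pp{}{\theta_n}$ for the model form correctly explains why $c$ must be the modular period \emph{once the normal form is established}, but it does not establish it. What is missing is either the construction of a $\T^n$-action on a full neighbourhood of $\SF_m$ in $M$ whose generators have $b$-Hamiltonians $p_1,\dots,p_{n-1},c\log|p_n|$, or a semilocal Moser/Darboux--Carath\'eodory argument relative to the torus; the closedness and nondegeneracy checks you call routine do not substitute for producing the coordinates.
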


The main novelty of this result compared to the action-angle coordinate theorem in \cite{Laurent-Gengoux2010} is that it is an action-angle theorem in a neighbourhood of a Liouville tori lying inside the set of points where the Poisson structure is non-regular whilst the theorem in \cite{Laurent-Gengoux2010} only concerns Lioville tori which are regular.

The gist of the proof contained in \cite{KMS} is the use of  a $b$-Hamiltonian $\mathbb T^n$-action tangent to the fibers of the integrable system. This is a non-trivial generalization of Duistermaat's techniques because the actions implied are no longer Hamiltonian in the standard sense but Poisson actions. The obstruction for these actions to be  honest Hamiltonian is given by the modular geometry of the Poisson manifold. The constant $c$ reflects this dependence on the modular geometry of the Poisson manifold which is totally encoded by the connected component of the critical hypersurface containing the Liouville torus under consideration. An interesting application of the action-angle coordinates for $b$-symplectic manifolds is a KAM theorem contained in \cite{KMS} we refer the reader to that paper for details.

\section{Cotangent lifts and $b$-cotangent lifts}\label{sec:general_cotlift}

In this section we work towards the definition of the models for integrable systems in symplectic and $b$-symplectic manifolds. The standard definitions of cotangent lifts are reviewed and generalized  to the context of $b$-symplectic manifolds.
\subsection{General facts about cotangent lifts}

 Let $G$ be a Lie group and let $M$ be any smooth manifold. Given a group action $\rho:G\times M\longrightarrow M$, we define its cotangent lift as the action on $T^\ast M$ given by $\hat{\rho_g}:=\rho^\ast_{g^-1}$ where $g\in G$. We then have a commuting diagram
\begin{align}
\begin{diagram}
\node{T^\ast M}  \arrow{e,t}{\hat{\rho_g}}
\arrow{s,l}{\pi}
\node{T^\ast M} \arrow{s,r}{\pi}\\
\node{ M}  \arrow{e,t}{\rho_g}  \node{M}
\end{diagram}
\end{align}
where $\pi$ is the canonical projection from $T^\ast M$ to $M$.

 The cotangent bundle $T^*M$ is a symplectic manifold endowed
with the symplectic form $\omega=-d\lambda$, where $\lambda$ is the Liouville one-form. The latter can be defined intrinsically:
\begin{equation}\label{liouvilleform}
 \langle \lambda_p, v\rangle:= \langle p, (\pi_p)_\ast (v)\rangle
\end{equation}
with  $v\in T(T^*M), p\in T^*M$.

A straightforward argument \cite{guilleminandsternberg} shows that the cotangent lift $\hat{\rho}$  is Hamiltonian with moment map $\mu:T^*M \to \mathfrak{g}^*$ given by
\begin{equation*}\label{eqn:lift}
\langle\mu(p),X \rangle := \langle \lambda_p ,X^\#|_{p} \rangle =\langle p,X^\#|_{\pi(p)}\rangle,
\end{equation*}
where  $p\in T^*M$, $X$ is an element of the Lie algebra $\mathfrak{g}$ and we use the same symbol $X^\#$ to denote the fundamental vector field of $X$ generated by the action on $T^\ast M$ or $M$.

An easy computation shows that the Liouville one-form is invariant under the action, i.e. $\hat \rho_g^\ast \lambda = \lambda$. It is known that invariance of $\lambda$ implies equivariance of the moment map $\mu$, meaning that
$$\mu \circ \hat \rho_g = Ad_{g^{-1}}^\ast \circ \mu .$$
A consequence is that the moment map is Poisson\footnote{cf. Proposition 7.1 in \emph{ A. Cannas da Silva and A. Weinstein, Geometric Models for Noncommutative Algebras Berkeley Mathematics Lecture Notes series, American Mathematical Society, 1999}.}.

We will also refer to this construction as the {\bf symplectic cotangent lift}.

\subsection{Symplectic cotangent lift of translations on the torus}\label{subsec:cot} In the special case where the manifold $M$ is a torus $\T^n$ and the group is $\T^n$ acting by translations, we obtain the following explicit structure: Let $\theta_1,\ldots,\theta_n$  be the standard ($S^1$-valued) coordinates on $\T^n$ and let
\begin{equation}\label{co}
\underbrace{\theta_1,\ldots,\theta_n}_{=:\theta}, \underbrace{a_1, \ldots, a_n}_{:=a}
\end{equation}
be the corresponding chart on $T^\ast \T^n$, i.e. we associate to the coordinates \eqref{co} the cotangent vector $\sum_i a_i d \theta_i \in T^\ast_\theta \T^n$.
The Liouville one-form, which we defined intrinsically above, is given in these coordinates by
$$ \lambda = \sum_{i=1}^n a_i d \theta_i $$
and its negative differential is the standard symplectic form on $T^\ast \T^n$:
\begin{equation} \omega_{can} = \sum_{i=1}^n d \theta_i \wedge d a_i .\label{eq:omegacan}\end{equation}
We denote by $\tau_\beta$ the translation by $\beta \in \T^n$ on $\T^n$; its lift to $T^\ast \T^n$ is given by
$$ \hat \tau_\beta: (\theta, a) \mapsto (\theta + \beta, a).$$
The moment map $\mu_{can}: T^\ast \T^n \to \mathfrak{t^\ast} $ of the lifted action with respect to the canonical symplectic form is
\begin{equation}\label{eq:mucan} \mu_{can}(\theta,a) = \sum_i a_i d\theta_i, \end{equation}
where the $\theta_i$ on the right hand side are understood as elements of $ \mathfrak{t^\ast}$ in the obvious way. Even simpler, if we identify $ \mathfrak{t^\ast}$ with $\R^n$ by choosing the standard basis $\frbd{\theta_1},\ldots, \frbd{\theta_n}$ of  $\mathfrak{t}$ then the moment map is just the projection onto the second component of $T^\ast \T^n \cong \T^n \times \R^n$.  We will adopt this viewpoint from now on. Note that the components of $\mu$ naturally define an integrable system on $T^\ast \T^n$.

\subsection{$b$-Cotangent lifts of $\T^n$}\label{subsec:bcot} As before, let $T^\ast \T^n$ be endowed with the standard coordinates $(\theta, a)$, $\theta \in \T^n$, $a \in \R^n$ and consider again the action on $T^\ast \T^n$ induced by lifting translations of the torus $\T^n$. We now want to view this action as a $b$-Hamiltonian action with respect to a suitable $b$-symplectic form. In analogy to the classical Liouville one-form we define the following  non-smooth one-form away from the hypersurface $Z=\{a_1 = 0\}$~:
$$c \log|a_1| d \theta_1 + \sum_{i=2}^n a_i d\theta_i.$$
The negative differential  of this form extends to a $b$-symplectic form on $T^\ast \T^n$, which we call the {\bf twisted $b$-symplectic form }on $T^\ast \T^n$ (we will explain the terminology below):
\begin{equation}\label{twistedform}
 \omega_{tw, c}:=\frac{c}{a_1} d\theta_1\wedge d a_1 + \sum_{i=2}^n d\theta_i\wedge da_i  .
\end{equation}
The moment map of the lifted action with respect to this $b$-symplectic form is then given by
\begin{equation}\label{eq:bmucan}\mu_{tw, c}:=(c \log|a_1|,a_2, \ldots, a_n),\end{equation}
where we identify $\mathfrak{t^\ast}$ with $\R^n$ as before.

We call this lift together with the $b$-symplectic form \eqref{twistedform} the {\bf twisted $b$-cotangent lift} with modular period $c$. Note that, in analogy to the symplectic case, the components of the moment map define a $b$-integrable system on $(T^\ast \T^n, \omega_{tw, c})$.

\begin{remark}\label{canonicalb} We use the term ``twisted $b$-symplectic form'' to distinguish our construction from the canonical $b$-symplectic form on $^b T^*M$, where $M$ is any smooth manifold. The latter is obtained naturally if we use the intrinsic definition of the Liouville one-form  \eqref{liouvilleform} in the $b$-setting (see e.g. \cite{nest}). More precisely, for $M$ a $b$-manifold, we define a $b$-form $\lambda$ on $^b T^*M$ via
 \begin{equation}\label{canonical_liouville}
  \langle \lambda_p, v\rangle:= \langle p, (\pi_p)_\ast (v)\rangle,
  \end{equation}
where $v\in ^b T(^b T^*M)$ and $p\in ^b T^*M$. The negative differential
$$\omega = - d \lambda$$
is the canonical $b$-symplectic form on $^b T^*M$.
Here, we view $^b T^*M$ as a $b$-manifold with hypersurface $\pi^{-1} (Z)$ where
$$\pi: \, ^b T^\ast M \to M$$ is the canonical projection. Choosing a local set of coordinates $x_1,\ldots, x_n$ on $M$, where $x_1$ is a defining function for $Z$ we have a corresponding chart
$$(x_1,\ldots,x_n,p_1,\ldots,p_n)$$
on $T^\ast M$, given by identifying the $2n$-tuple above with the $b$-cotangent vector
$$p_1 \frac{dx_1}{x_1} + \sum_{i = 2}^n p_i dx_i \in  ^b T_x^*M.$$
In these coordinates
$$\lambda = p_1 \frac{dx_1}{x_1} + \sum_{i = 2}^n p_i dx_i \in ^b\! T^\ast(^b T^*M).$$

Note that the singularity here is given by the coordinate $x_1$ on the base manifold whereas in our ``twisted'' construction it is given by a fiber coordinate, which is what we require for the description of $b$-integrable systems.
\end{remark}

\subsection{$b$-Cotangent lifts in the general setting}\label{sec:cotlift_general} Above we focused on the case where the manifold $M$ is a torus and the action is by rotations of the torus on itself, since this is the model that describes ($b$-)integrable systems semilocally around a Liouville torus.

To obtain a wider class of examples, we now consider  any manifold $M$ and the action of any Lie group $G$ on $M$:
\begin{equation}\label{cotlift}
\rho:G\times M \to M : (g,m)\mapsto \rho_g(m).
\end{equation}
As described in Section \ref{sec:general_cotlift} we can lift the action to an action $\hat{\rho}$ on $T^\ast M$, which is Hamiltonian with respect to the standard symplectic structure on  $T^\ast M$. We want to investigate modifications of this construction, which lead to Hamiltonian actions on $b$-symplectic manifolds.

\subsubsection{\bf{Canonical $b$-cotangent lift.}} Connecting with Remark \ref{canonicalb}, assume that $M$ is an $n$-dimensional $b$-manifold with critical hypersurface $Z$. Instead of $T^\ast M$ consider the $b$-cotangent bundle $^b T^\ast M$ endowed with the canonical $b$-symplectic structure as described in the remark. Moreover, assume that the action of $G$ on $M$ preserves the hypersurface $Z$, i.e. $\rho_g$ is a $b$-map for all $g\in G$. Then the lift of $\rho$ to an action on  $^b T^\ast M$ is well-defined:
$$\hat{\rho}:G\times ^b \! T^\ast M \to ^b \! T^\ast M : (g,p)\mapsto \rho^\ast_{g^{-1}}(p).$$
We call this action on $^b T^\ast M$, endowed with the canonical $b$-symplectic structure, the {\bf canonical $b$-cotangent lift}.

\begin{proposition}\label{prop:cancotlift}
The canonical $b$-cotangent lift is Hamiltonian with equivariant moment map given by
 \begin{equation}\label{momentmap_canonical}\mu:\, ^b \! T^*M \to { \mathfrak{g}^*}, \quad
\langle\mu(p),X \rangle := \langle \lambda_p ,X^\#|_p \rangle =\langle p,X^\#|_{\pi(p)}\rangle,
\end{equation}
where $p\in \! ^b T^\ast M$, $X\in \mathfrak{g}$, $X^\#$ is the fundamental vector field of $X$ under the action on $^b T^\ast M$ and the function $\langle \lambda,X^\# \rangle$ is smooth because  $X^\#$  is a $b$-vector field.
\end{proposition}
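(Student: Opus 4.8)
The plan is to adapt the classical argument of Guillemin--Sternberg cited in the symplectic case, checking that each step survives the passage to the $b$-category. The central object is the canonical $b$-Liouville one-form $\lambda$ defined intrinsically via \eqref{canonical_liouville}. First I would verify that $\lambda$ is a genuine $b$-one-form, i.e.\ a smooth section of $\Lambda^1(\,^bT^*(\,^bT^*M))$, by writing it in the local chart $(x_1,\ldots,x_n,p_1,\ldots,p_n)$ as computed in Remark \ref{canonicalb}, namely $\lambda = p_1\frac{dx_1}{x_1}+\sum_{i=2}^n p_i\,dx_i$. From this coordinate expression the canonical $b$-symplectic form $\omega = -d\lambda$ is manifestly a closed, maximal-rank $b$-form, so $(\,^bT^*M,\omega)$ is a $b$-symplectic manifold.

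Next I would establish the key invariance property $\hat\rho_g^\ast \lambda = \lambda$ for all $g\in G$. This is where the hypothesis that each $\rho_g$ is a $b$-map (preserving $Z$) is essential: it guarantees that the lifted map $\hat\rho_g = \rho^\ast_{g^{-1}}$ sends $\,^bT^*M$ to itself and is a $b$-diffeomorphism, so that the tautological definition \eqref{canonical_liouville} is preserved verbatim. The argument is the same intrinsic one as in the smooth case --- for $v\in\,^bT(\,^bT^*M)$ one unwinds $\langle(\hat\rho_g^\ast\lambda)_p,v\rangle$ using $\pi\circ\hat\rho_g = \rho_g\circ\pi$ and the chain rule --- but I must check that all pushforwards and pullbacks make sense as maps of $b$-tangent bundles rather than ordinary tangent bundles. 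Differentiating the invariance $\hat\rho_g^\ast\lambda=\lambda$ gives $\hat\rho_g^\ast\omega=\omega$, so the lifted action preserves the $b$-symplectic form.

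To show the action is Hamiltonian in the sense of Definition \ref{def:bhamaction}, I would compute, for $X\in\mathfrak{g}$, the contraction $\iota_{X^\#}\omega$ and exhibit it as $d$ of the candidate component $\langle\mu,X\rangle = \langle\lambda,X^\#\rangle$. Using the $b$-Cartan formula \eqref{bcartan} together with $\SL_{X^\#}\lambda=0$ (the infinitesimal form of invariance) yields $\iota_{X^\#}(d\lambda) = -d(\iota_{X^\#}\lambda)$, i.e.\ $\iota_{X^\#}\omega = d\langle\lambda,X^\#\rangle$, which is exactly the required exactness of the $b$-one-form $\iota_{X^\#}\omega$. The identity $\langle\lambda_p,X^\#|_p\rangle = \langle p,X^\#|_{\pi(p)}\rangle$ follows immediately from the definition \eqref{canonical_liouville} applied to $v=X^\#|_p$, since $(\pi_p)_\ast X^\#|_p = X^\#|_{\pi(p)}$ by naturality of the lift. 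The involutivity condition $\omega(X^\#,Y^\#)=0$ then reduces to $\langle p, [X,Y]^\#\rangle$ vanishing appropriately, which follows from the abelian-or-bracket bookkeeping exactly as in the symplectic cotangent lift.

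The main obstacle, and the only genuinely new point, is the smoothness claim: one must verify that the function $\langle\lambda,X^\#\rangle$ and hence the moment map $\mu$ is a \emph{smooth} ($\R$-valued, not merely $b$-) function, despite $\lambda$ having a pole along $\pi^{-1}(Z)$. The resolution is that $X^\#$ is a $b$-vector field --- because $\rho_g$ preserves $Z$, its fundamental vector fields are tangent to $Z$ --- so the potentially singular term $p_1\frac{dx_1}{x_1}$ paired against the $x_1\frac{\partial}{\partial x_1}$-component of $X^\#$ produces a smooth function. I would make this explicit by writing $X^\# = g_1\, x_1\frac{\partial}{\partial x_1} + \sum_{i\ge 2} g_i\frac{\partial}{\partial x_i} + (\text{fiber terms})$ with smooth coefficients $g_i$ and pairing term by term; the factor $x_1$ cancels the pole, confirming $\langle\lambda,X^\#\rangle\in C^\infty(\,^bT^*M)$. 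Finally, equivariance $\mu\circ\hat\rho_g = Ad^\ast_{g^{-1}}\circ\mu$ follows from the invariance of $\lambda$ by the same standard implication invoked in the smooth case, and the Poisson property of $\mu$ is then automatic.
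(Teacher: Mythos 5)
Your proposal is correct and follows essentially the same route as the paper's proof: invariance of the tautological $b$-Liouville one-form via the intrinsic definition and the chain rule, the $b$-Cartan formula applied to $\SL_{X^\#}\lambda=0$ to obtain $\iota_{X^\#}\omega=d\langle\lambda,X^\#\rangle$, and equivariance deduced from invariance of $\lambda$. The only addition is your explicit coordinate check that the $x_1$ factor of the fundamental $b$-vector field cancels the pole of $\lambda$, a point the paper asserts in the statement without spelling out; this is a welcome but not essentially different elaboration.
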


\begin{proof}
The proof of Equation \eqref{momentmap_canonical} for the moment map is exactly the same as in the symplectic case: Using the implicit definition of $\lambda$, Equation \eqref{canonical_liouville}, we show that $\lambda$ is invariant under the action:
\begin{align*}\langle (\hat \rho_g^\ast \lambda)_p,v \rangle &= \langle \lambda_{\hat \rho_g(p)}, (\hat \rho_g)_\ast v \rangle =
 \langle \hat \rho_g(p) , (\pi_{\hat \rho_g(p)})_\ast ((\hat \rho_g)_\ast v )\rangle  = \\
 \\ &= \langle \rho^\ast_{g^{-1}}(p), (\rho_{g^{-1}})_\ast((\pi_p)_\ast(v)) \rangle= \langle p,(\pi_p)_\ast(v)\rangle .
 \end{align*}
 In going from the first to the second line we have used the definition of $\hat \rho$ and applied the chain rule to $\pi_{\hat \rho_g(p)} \circ \hat \rho_g = \rho_{g^{-1}} \circ \pi_p.$

Hence we have
$\SL_{X^\#}\lambda = 0$
and applying the Cartan formula for $b$-symplectic forms, Equation \eqref{bcartan}, we obtain
$$  \iota_{X^\#} \omega_p = - \iota_{X^\#} d\lambda_p = d(\iota_{X^\#} \lambda_p),$$
which proves the expression for the moment map stated above.

Equivariance of $\mu$ is a consequence of the invariance of $\lambda$:
\begin{align*}
\langle &(Ad_{g^{-1}}^\ast  \circ \mu)(p), X  \rangle = \langle \mu(p), Ad_{g^{-1}} X \rangle = \langle \lambda_p, \underbrace{(Ad_{g^{-1}} X )^\#}_{=  (\hat \rho_g)_\ast X^\#}|_p\rangle = \\
& = \langle \hat \rho_g^\ast \lambda_p, X^\#|_{\hat \rho_{g^{-1}}(p)} \rangle = \langle \lambda_{ \hat \rho_{g^{-1}}(p)}, X^\# |_{\hat \rho_{g^{-1}}(p)} \rangle =  \langle \mu( \hat \rho_{g^{-1}}(p)), X \rangle
\end{align*}
for all $g \in G$, $X\in \mathfrak{g}$, $p\in  T^\ast M$, where in the first equality of the second line we have used that $\lambda$ is invariant.
\end{proof}

\begin{remark} The condition that the action preserves $Z$ means that all fundamental vector fields are tangent to $Z$ and therefore at a point in $Z$ the maximum number of independent fundamental vector fields is $n-1$. This means that the moment map of such an action never defines a $b$-integrable system on $^b T^\ast M$ since this would require $n$ independent functions.
\end{remark}

\subsubsection{\bf{Twisted $b$-cotangent lift.}}

We have already defined the twisted $b$-cotangent lift on the cotangent space of a torus $T^\ast \T^n$ in Section \ref{subsec:bcot}. In particular, on $T^\ast S^1$ with standard coordinates $(\theta, a)$ we have the logarithmic Liouville one-form
$\lambda_{tw,c} = \log |a| d\theta$ for $a\neq 0$.

Now consider any $(n-1)$-dimensional manifold $N$ and let $\lambda_N$ be the standard Liouville one-form on  $T^\ast N$. We endow the product $T^\ast (S^1 \times N) \cong T^\ast S^1 \times T^\ast N$ with the product structure $\lambda:= (\lambda_{tw,c}, \lambda_N)$ (defined for $a\neq 0$). Its negative differential $\omega = -d\lambda$ is a $b$-symplectic structure  with critical hypersurface given by $a=0$.

Let $K$ be a Lie group acting on $N$ and consider the component-wise action of $G:=S^1\times K$ on $M:=S^1 \times N$ where $S^1$ acts on itself by rotations. We lift this action to $T^\ast M$ as described in the beginning of this section. This construction, where $T^\ast M$ is endowed with the $b$-symplectic form $\omega$, is called the \textbf{twisted $b$-contangent lift}.

If $(x_1,\ldots,x_{n-1})$ is a chart on $N$ and $(x_1,\ldots,x_{n-1},y_1,\ldots,y_{n-1})$ the corresponding chart on $T^\ast N$ we have the following local expression for $\lambda$
\begin{equation*}\label{logliouvilleform}
\lambda = \log |a| d\theta + \sum_{i=1}^{n-1} y_i dx_i .
\end{equation*}

Just as in the symplectic case and in the case of the  canonical $b$-cotangent lift, this action is Hamiltonian with moment map given by contracting the fundamental vector fields with $\lambda$:

\begin{proposition}\label{prop:twmoment} The twisted $b$-cotangent lift on $M=S^1\times N$ is Hamiltonian with equivariant moment map $\mu$ given by
 \begin{equation}
\langle\mu(p),X \rangle := \langle \lambda_p ,X^\#|_p \rangle,
\end{equation}
where $X^\#$ is the fundamental vector field of $X$ under the action on $T^\ast M$.
\end{proposition}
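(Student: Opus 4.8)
The plan is to imitate, almost verbatim, the proof of Proposition~\ref{prop:cancotlift}: show that the product one-form $\lambda$ is invariant under the lifted $G$-action, and then read off the moment map by applying the $b$-Cartan formula \eqref{bcartan}. The only genuinely new input compared with the canonical case is that $\lambda$ now carries the \emph{twisted} logarithmic term $c\log|a|\,d\theta$ in a fibre direction, so the invariance step must be checked by hand for that term; everything downstream is formally identical, since the argument in Proposition~\ref{prop:cancotlift} uses nothing about $\lambda$ beyond its invariance.

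First I would reduce the invariance $\hat\rho_g^\ast \lambda = \lambda$ to the two factors of the splitting $T^\ast M \cong T^\ast S^1 \times T^\ast N$, which is legitimate because both the action of $G = S^1 \times K$ and the one-form $\lambda = (\lambda_{tw,c}, \lambda_N)$ are products. On the $T^\ast N$ factor, $\lambda_N$ is the ordinary Liouville one-form \eqref{liouvilleform}, and its invariance under the cotangent lift of the $K$-action is the classical computation reproduced in Proposition~\ref{prop:cancotlift} (it uses only the intrinsic definition of $\lambda_N$ together with $\pi_{\hat\rho_g(p)} \circ \hat\rho_g = \rho_{g^{-1}} \circ \pi_p$). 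On the $T^\ast S^1$ factor, the rotation $\tau_\beta:\theta \mapsto \theta + \beta$ lifts to $\hat\tau_\beta:(\theta,a)\mapsto(\theta+\beta,a)$, which fixes $a$ and satisfies $\hat\tau_\beta^\ast(d\theta)=d\theta$; since $\lambda_{tw,c}=c\log|a|\,d\theta$ depends on the fibre only through $|a|$ and on the base only through $d\theta$, it is manifestly preserved as a $b$-form. I would also record that every fundamental vector field $X^\#$ is tangent to the critical hypersurface $Z=\{a=0\}$: the $S^1$-generator lifts to $\partial/\partial\theta$, which has no $\partial/\partial a$ component, while the $K$-generators live entirely on the $T^\ast N$ factor. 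Hence the $X^\#$ are honest $b$-vector fields and the $b$-Cartan formula applies.

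With invariance in hand, $\SL_{X^\#}\lambda=0$ for every $X\in\mathfrak{g}$, and the $b$-Cartan formula \eqref{bcartan} gives
\[
\iota_{X^\#}\omega = -\iota_{X^\#} d\lambda = d(\iota_{X^\#}\lambda) = d\langle\mu, X\rangle,
\]
which is exactly the asserted moment-map formula $\langle\mu(p),X\rangle=\langle\lambda_p,X^\#|_p\rangle$. Equivariance of $\mu$ then follows by repeating the chain of equalities at the end of the proof of Proposition~\ref{prop:cancotlift}, since that computation invokes only the invariance of $\lambda$ and the relation $(Ad_{g^{-1}}X)^\# = (\hat\rho_g)_\ast X^\#$.

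The one point demanding care — and the place where the twisted construction differs essentially from the canonical one — is the nature of $\langle\mu,X\rangle$ along the $S^1$-direction. Here $X^\#=\partial/\partial\theta$ and $\langle\mu,X\rangle=\langle\lambda,\partial/\partial\theta\rangle=c\log|a|$ is a genuine $b$-function rather than a smooth one, because the singularity of $\lambda$ sits in the fibre coordinate $a$ rather than on the base. This is not an obstruction but the whole purpose of the construction: it is precisely what makes the components of $\mu$ into a $b$-integrable system on $(T^\ast M,\omega)$, matching the local model \eqref{twistedform}. Thus the only mild obstacle is bookkeeping the logarithmic term through the invariance computation and confirming that its contraction with $X^\#$ lands in $^bC^\infty(M)$; both are immediate from the explicit coordinate expression, so no analytic difficulty arises.
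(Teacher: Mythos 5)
Your proposal is correct and follows essentially the same route as the paper's own proof: split the invariance of $\lambda=(\lambda_{tw,c},\lambda_N)$ along the product $T^\ast S^1\times T^\ast N$, check invariance of the logarithmic term by hand (the lift of a rotation fixes the fibre coordinate $a$ and preserves $d\theta$), quote the classical result for $\lambda_N$, and conclude via the $b$-Cartan formula, with equivariance following from invariance exactly as in Proposition~\ref{prop:cancotlift}. Your added remarks — that the fundamental vector fields are tangent to $Z=\{a=0\}$ so the $b$-Cartan formula applies, and that $\langle\mu,X\rangle=c\log|a|$ is a genuine $b$-function — are correct and make explicit points the paper leaves implicit.
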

\begin{proof}
As in the proof of Proposition \ref{prop:cancotlift}, we show that the action preserves the logarithmic Liouville one-form $\lambda=(\lambda_{tw,c}, \lambda_N)$. Since the action splits this amounts to showing invariance of $\lambda_{tw,c}$ under $S^1$; the invariance of $\lambda_N$ under $K$ is the classical symplectic result. The former is easy to see:
$$(\hat{\tau})^\ast_\varphi \lambda_{tw,c} =  \log |a\circ \hat{\tau}_\varphi| d(\underbrace{\theta \circ \hat{\tau}_\varphi}_{= \theta + \varphi})= \log |a| d\theta,$$
where $\tau$ is the action of $S^1$ on itself by rotations and $\varphi \in S^1$. This shows that $\SL_{X^\#}\lambda = 0$  and as before we conclude the proof by using Cartan's formula.
\end{proof}

\begin{remark} A special case of a manifold $S^1\times N$ is a cylinder $\T^k \times \R^{n-k}$. We will use this construction in Section \ref{sec:affine}.
\end{remark}

In order to compute the moment map it is convenient to observe that the expression $\langle \lambda, X^\# \rangle$ remains unchanged when we replace the fundamental vector field $ X^\#$ of the action on $T^\ast M$ by any vector field on $T^\ast M$ that projects to the same vector field on $M$ (namely the fundamental vector field of the action on $M$). This follows immediately from the definition of $\lambda$.

\section{Cotangent models for integrable systems}\label{sec:cotmodels}
In this section we give cotangent models for integrable systems on symplectic and $b$-symplectic manifolds as it was done for Hamiltonian actions in \cite{marle} and \cite{guilleminandsternberg2}.
We write a ($b$-)integrable system as a triple $(M, \omega, F)$ where $M$ is a manifold, $\omega$ a ($b$-)symplectic form and $F$ the set of integrals.
With the notation introduced in Subsections \ref{subsec:cot} and \ref{subsec:bcot} we define the following models of integrable systems, which we will use below to give a semilocal description of integrable and $b$-integrable systems.
\begin{enumerate}\item \begin{equation}(T^\ast \T^n)_{can} := (T^\ast \T^n, \omega_{can}, \mu_{can})\label{eq:can}\end{equation} with $\omega_{can}, \mu_{can}$ defined by Equations (\ref{eq:omegacan}) and (\ref{eq:mucan}) respectively.
 \item  \begin{equation}(T^\ast \T^n)_{tw, c} := (T^\ast \T^n, \omega_{tw, c}, \mu_{tw, c})\end{equation} with $\omega_{tw, c}, \mu_{tw, c})$ defined by Equations (\ref{twistedform}) and (\ref{eq:bmucan}) respectively.
 \end{enumerate}
We say that two ($b$-)integrable systems $(M_1, \omega_1, F_1)$ and $(M_2, \omega_2, F_2)$ are {\bf equivalent} if there exists a Poisson diffeomorphism $\psi$ and a map $\varphi:  \R^s \to  \R^s$ such that the following diagram commutes:
\begin{align*}
  \begin{diagram}
    \node{(M_1, \omega_1)}\arrow{e,t}{\psi}\arrow{se,b}{F_1}\node{(M_2, \omega_2)}\arrow{s,r}{\varphi \circ  F_2}\\
    \node[2]{\R^{s}}
  \end{diagram}
\end{align*}


\subsection{Symplectic case} We restate the Liouville-Mineur-Arnold theorem (Theorem \ref{thm:liouvillearnold}) in terms of the symplectic cotangent model:

\begin{theorem} Let $F=(f_1,\ldots,f_n)$ be an integrable system on the symplectic manifold $(M,\omega)$. Then semilocally around a regular Liouville torus the system is equivalent to the cotangent model $(T^\ast \T^n)_{can}$ restricted to a neighbourhood of the zero section $(T^\ast \T^n)_0$ of $T^\ast \T^n$.
\end{theorem}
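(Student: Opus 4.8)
The plan is to read off the statement directly from the action-angle coordinates provided by Theorem \ref{thm:liouvillearnold}, which does all of the analytic work. First I would apply that theorem to the regular Liouville torus $\F_m$ to obtain, on a neighbourhood $U$ of $\F_m$, smooth coordinates $(\theta_1,\dots,\theta_n,p_1,\dots,p_n)$ giving a diffeomorphism $U\simeq \T^n\times B^n$ with $\omega=\sum_{i=1}^n d\theta_i\wedge dp_i$ and with $F$ depending only on the action coordinates, say $F=g(p_1,\dots,p_n)$ for a smooth map $g\colon B^n\to\R^n$. Since the one-forms $dp_i$ are invariant under the shift $p_i\mapsto p_i-p_i^{(0)}$, I may normalize the action coordinates additively so that $\F_m$ corresponds to $p=0$; then $B^n$ is an open ball centred at the origin of $\R^n$, and the whole symplectic form is unchanged by this relabeling.

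Next I would define the comparison map. Identifying $T^\ast\T^n\cong\T^n\times\R^n$ by the canonical chart $(\theta,a)$ of Subsection \ref{subsec:cot}, let $\psi\colon U\to T^\ast\T^n$ be the map that declares the action-angle coordinates of a point to be the canonical coordinates of its image, i.e. in these charts $\psi(\theta,p)=(\theta,a)$ with $a=p$. By construction $\psi$ is a diffeomorphism onto $\T^n\times B^n$, and because $\F_m$ was placed at $p=0$ this image is precisely a neighbourhood of the zero section $(T^\ast\T^n)_0=\T^n\times\{0\}$. Pulling back the canonical form gives
\[
\psi^\ast\omega_{can}=\psi^\ast\Big(\sum_{i=1}^n d\theta_i\wedge da_i\Big)=\sum_{i=1}^n d\theta_i\wedge dp_i=\omega,
\]
so $\psi$ is a symplectomorphism, hence in particular a Poisson diffeomorphism as required by the definition of equivalence.

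Finally I would verify the commuting triangle. In the model $(T^\ast\T^n)_{can}$ the integrable system is the moment map $\mu_{can}$, which under the identification $T^\ast\T^n\cong\T^n\times\R^n$ is the projection $(\theta,a)\mapsto a$ (Subsection \ref{subsec:cot}). Therefore $\mu_{can}\circ\psi(\theta,p)=p$, and setting $\varphi:=g$ we obtain $\varphi\circ\mu_{can}\circ\psi(\theta,p)=g(p)=F(\theta,p)$. Taking $M_1=U$, $F_1=F$, and $M_2=\psi(U)$, $F_2=\mu_{can}$, this is exactly the equality $F_1=\varphi\circ F_2\circ\psi$ that makes the defining diagram commute, so the two systems are equivalent semilocally around $\F_m$.

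There is no substantial difficulty to overcome here beyond invoking Theorem \ref{thm:liouvillearnold}: the entire content is the observation that action-angle coordinates are, tautologically, canonical coordinates for the cotangent model. The only points that genuinely require care are bookkeeping points, namely fixing the additive normalization of the action coordinates so that the Liouville torus lands on the zero section, and checking that the resulting image $\psi(U)$ is a neighbourhood of $(T^\ast\T^n)_0$ rather than of another fibre; both are handled by the translation $p\mapsto p-p^{(0)}$ above.
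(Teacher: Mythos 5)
Your proposal is correct and follows essentially the same route as the paper: invoke the Liouville--Mineur--Arnold theorem, identify the action-angle chart with the canonical chart on $T^\ast\T^n$, recognize $\mu_{can}$ as the projection onto the action coordinates, and let $\varphi$ encode the functional dependence of $F$ on the actions. Your explicit normalization $p\mapsto p-p^{(0)}$ placing the Liouville torus on the zero section is a point the paper leaves implicit, but otherwise the arguments coincide.
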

\begin{proof}
Let $\ST$ be a regular Liouville torus of the system.
The action-angle coordinate theorem (Theorem \ref{thm:liouvillearnold}) implies that there exists a neighbourhood $U$ of $\ST$ and a symplectomorphism
 $$ \psi: U \to  (\T^n \times B^n, \omega_{can})$$
such that the ``action coordinates'', i.e. the projections onto $ B^n$, depend only on the integrals $f_1,\ldots,f_n$, hence their composition with $\psi$ yields an equivalent integrable system on $U$. We know that the projections onto $B^n$ correspond to the moment map $\mu_{can}$ of the cotangent lifted action on $T^\ast \T^n \cong \T^n \times \R^n$ (restricted to $\T^n \times B^n$ and understood with respect to the canonical basis on $\mathfrak{t^\ast}$), hence we can write
\begin{align*}
  \begin{diagram}
    \node{U}\arrow{e,t}{\psi}\arrow{se,b}{F}\node{(T^\ast \T^n, \omega_{can})}\arrow{s,r}{\varphi \circ  \mu}\\
    \node[2]{\R^{n}}
  \end{diagram}
\end{align*}
where $\varphi$ is the map that establishes the dependence of the action coordinates on $f_1,\ldots,f_n$.
\end{proof}

\subsection{$b$-symplectic case} The model of twisted $b$-cotangent lift lets us express the action-angle coordinate theorem for $b$-integrable systems in the following way:
\begin{theorem} Let $F=(f_1,\ldots,f_n)$ be a $b$-integrable system on the $b$-symplectic manifold $(M,\omega)$.  Then semilocally around a regular Liouville torus $\ST$, which lies inside the exceptional hypersurface $Z$ of $M$, the system is equivalent to the cotangent model $(T^\ast \T^n)_{tw, c} $ restricted to a neighbourhood of $(T^\ast \T^n)_0$. Here $c$ is the modular period of the connected component of $Z$ containing  $\ST$.
\end{theorem}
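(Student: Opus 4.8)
The plan is to mirror the proof just given in the symplectic case, replacing the Liouville--Mineur--Arnold theorem by its $b$-symplectic analogue, Theorem~\ref{thm:baa}. Let $\ST$ be the regular Liouville torus, lying inside $Z$. Since every point of $\ST$ is a regular point of $Z$, I would first normalise $F$ near $\ST$ to the form $F=(f_1,\dots,f_{n-1},f_n=\log|t|)$ with $t$ a defining function for $Z$, as allowed by the remark following Definition~\ref{def:pbintegrable}. Applying Theorem~\ref{thm:baa} then produces a neighbourhood $U$ of $\ST$ and coordinates $(\theta_1,\dots,\theta_n,p_1,\dots,p_n)\colon U\to \T^n\times B^n$ in which
\[
\omega|_U=\sum_{i=1}^{n-1} d\theta_i\we dp_i+\frac{c}{p_n}\,d\theta_n\we dp_n,
\]
the action coordinates $p_i$ depend only on $F$, and $c$ is exactly the modular period of the component of $Z$ containing $\ST$. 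Here the critical hypersurface appears as $\{p_n=0\}$, which indeed contains $\ST$.

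Next I would match this chart with the twisted model. The only discrepancy between the display above and the twisted form $\omega_{tw,c}$ of \eqref{twistedform} is the position of the singular slot: in Theorem~\ref{thm:baa} it sits in the last index, whereas in \eqref{twistedform} it sits in the first. I would therefore relabel the coordinate pairs, sending $(\theta_n,p_n)$ to the first pair $(\theta_1,a_1)$ of the model and $(\theta_i,p_i)$, $1\le i\le n-1$, to $(\theta_{i+1},a_{i+1})$. After translating the smooth action coordinates $p_1,\dots,p_{n-1}$ by constants so that they vanish on $\ST$ (a fibrewise translation, hence a $b$-symplectomorphism that only alters the moment map by a constant), the relabelled chart defines a map $\psi\colon U\to (T^\ast\T^n,\omega_{tw,c})$ with $\psi^\ast\omega_{tw,c}=\omega|_U$; because both sides are given by identical expressions in the respective coordinates, $\psi$ is a $b$-symplectomorphism, in particular a Poisson diffeomorphism, onto a neighbourhood of the zero section $(T^\ast\T^n)_0$ (the torus $\ST$ maps to $\{a=0\}$, since $a_1=p_n=0$ on $Z$ and the remaining $a_i$ were arranged to vanish on $\ST$).

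Finally, I would identify the integrals. Pulling back the moment map \eqref{eq:bmucan} gives $\mu_{tw,c}\circ\psi=(c\log|p_n|,p_1,\dots,p_{n-1})$, each component of which depends only on $F$. As in the symplectic proof, $F$ and $\mu_{tw,c}\circ\psi$ are submersions whose common fibres are the Liouville tori of the foliation $\SF$, so the dependence is governed by a base map $\varphi\colon\R^n\to\R^n$ with $\varphi\circ\mu_{tw,c}\circ\psi=F$, which yields the required equivalence $(M,\omega,F)\simeq (T^\ast\T^n)_{tw,c}$.

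I expect the one genuinely new point, compared with the symplectic case, to be the bookkeeping in the singular direction: one must check that the $b$-function $f_n=\log|t|$ is recovered from the logarithmic component $c\log|p_n|$ of the model's moment map by a legitimate map $\varphi$, and that the smooth integrals are not obstructed by the fact that $c\log|p_n|$ only sees $|p_n|$ and not the side of $Z$. The key observation resolving this is that both $t$ and $p_n$ are first-order defining functions for $Z$, so $\log|t|-\log|p_n|=\log|t/p_n|$ extends smoothly across $Z$ and, being a difference of $F$-basic functions, is constant on Liouville tori; hence it is a function of the action coordinates and can be absorbed into $\varphi$. The matching of the constant $c$ is automatic, since Theorem~\ref{thm:baa} delivers precisely the modular period named in the statement.
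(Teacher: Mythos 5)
Your proposal is correct and follows the same route as the paper: invoke the $b$-action-angle theorem (Theorem~\ref{thm:baa}) around $\ST$, identify the resulting normal form of $\omega$ with $\omega_{tw,c}$ after relabelling which index carries the singular slot, and read off the equivalence from the fact that the action coordinates depend only on $F$, packaged as the commuting diagram with $\varphi\circ\mu_{tw,c}$. The paper's proof is terser (it simply says the argument is the same as in the symplectic case), so your extra bookkeeping --- the index permutation, the translation of the smooth actions, and the observation that $\log|t|-\log|p_n|$ extends smoothly and is absorbed into $\varphi$ --- is a legitimate filling-in of details the paper leaves implicit rather than a different method.
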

\begin{proof}
The proof is the same as above using the action-angle coordinate theorem for $b$-integrable systems (Theorem \ref{thm:baa}): Around the Liouville torus $\ST$ we have a Poisson diffeomorphism
 $$ \psi: U \to  \T^n \times B^n$$
taking the $b$-symplectic form on $U$ to
$$\sum_{i=1}^{n-1}  d\theta_i  \wedge dp_i + \frac{c}{p_n} d\theta_n \wedge dp_n,$$
where $(\theta_1,\ldots, \theta_n, p_1,\ldots,p_n)$  are the standard coordinates on $\T^n \times B^n$, and such that $p_1,\ldots,p_n$ only depend on the integrals. Hence in the language of Section \ref{subsec:bcot} we have a commuting diagram
\begin{align*}
  \begin{diagram}
    \node{U}\arrow{e,t}{\psi}\arrow{se,b}{F}\node{(T^\ast \T^n, \omega_{tw, c})}\arrow{s,r}{\varphi \circ  \mu_{tw, c}}\\
    \node[2]{{\R^{n}}}
  \end{diagram}
\end{align*}
\end{proof}
\begin{remark} Observe that the model depends strongly on the modular class of the Poisson manifolds. Recall that via the Mazzeo-Melrose theorem and the fact that $b$-cohomology is isomorphic to Poisson cohomology, the constant  $c$ can be seen as a Poisson invariant.
\end{remark}

\section{ Constructing examples on $b$-symplectic manifolds}\label{sec:examplesb}

As an application of the models above we can construct examples of ($b-$)integrable systems:

\begin{theorem}\label{thm:cotlift_intsys} Let $M$ be a smooth manifold of dimension $n$ and let $G$ be a $n$-dimensional abelian Lie group acting on $M$ effectively. Pick a basis $X_1,\ldots, X_n$ of the Lie algebra of $G$. Consider the moment map
$\mu: T^\ast M \to {\mathfrak{g}^\ast}$
of one of the following Hamiltonian actions:
\begin{enumerate}
\item the (symplectic) cotangent lift on $T^\ast M$.
\item the twisted $b$-cotangent lift on $T^\ast M$, where we assume that $M=S^1 \times N$ and $G=S^1\times K$ for $N$ an $n-1$ dimensional manifold and $K$ a Lie algebra and that the action splits with $S^1$ acting on itself by rotations.
\end{enumerate}
Then the components of the moment map with respect to the basis $X_i$ define an (1) integrable resp. (2) $b$-integrable system on $T^\ast M$.
\end{theorem}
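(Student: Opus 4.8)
The plan is to check, for the $n$ functions $\mu_i := \langle \mu, X_i\rangle$, the two defining properties of a ($b$-)integrable system: pairwise Poisson involutivity and independence of the differentials on a dense set (and, in case (2), independence as $b$-forms on a dense subset of $Z$ as well). A useful simplification is that case (2) reduces almost entirely to case (1) applied to the $K$-action on $N$, together with a direct analysis of the single logarithmic component along $Z$.

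Involutivity follows at once from the abelian hypothesis. By the discussion in Section \ref{sec:general_cotlift} for the symplectic lift and by Proposition \ref{prop:twmoment} for the twisted $b$-lift, the moment map $\mu$ is equivariant; since $G$ is abelian, $Ad^\ast_{g^{-1}} = \id$ and equivariance degenerates to plain invariance $\mu \circ \hat\rho_g = \mu$. As the Hamiltonian vector field of $\mu_i$ is, up to sign, the fundamental vector field $X_i^\#$, one obtains
\[
\{\mu_i,\mu_j\} = \pm X_i^\#(\mu_j) = \pm \SL_{X_i^\#}\mu_j = 0,
\]
the last equality being the infinitesimal form of the invariance of $\mu_j$. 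This computation is indifferent to whether $\omega$ is symplectic or $b$-symplectic, since the vector fields $X_i^\#$ are smooth and tangent to $Z$, so that $X_i^\#(\mu_j)$ is a smooth function even when $\mu_j$ is a genuine $b$-function.

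For independence I would argue fibrewise. Over a point $x = \pi(p)$ the restriction of $\mu_i$ to the cotangent fibre is the linear functional $q \mapsto \langle q, X_i^\#|_x\rangle$, so the fibre-differentials of $\mu_1,\dots,\mu_n$ at $p$ are independent exactly when $X_1^\#|_x,\dots,X_n^\#|_x$ span $T_x M$. Hence $d\mu_1 \wedge \dots \wedge d\mu_n$ is nonzero at every point of $\pi^{-1}(U)$, where $U \subseteq M$ is the open locus on which the fundamental vector fields are pointwise independent, and it remains only to see that $U$ is dense. This is the heart of the matter and the step I expect to be the main obstacle: effectiveness of an $n$-dimensional abelian $G$ on $M^n$ does force the generic orbit to be $n$-dimensional in the cases that matter here — for $G$ compact, in particular for the torus models underlying the action-angle theorems, this is the principal-orbit-type theorem, the principal isotropy being discrete by effectiveness — whereas for general abelian $G$ one must rule out degenerations (the fundamental field flattening on an open set), so that some mild regularity beyond bare effectiveness is implicitly in play. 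Granting density of $U$, the set $\pi^{-1}(U)$ is dense and open in $T^\ast M$, which settles case (1).

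For case (2) it remains to treat the critical hypersurface. In the splitting $T^\ast M = T^\ast S^1 \times T^\ast N$ with coordinates $(\theta,a)$ and $(x,y)$, one has $Z = \{a=0\} \cong S^1 \times T^\ast N$, the first moment component is the genuine $b$-function $f_1 = c\log|a|$ with $b$-differential $\tfrac{c}{a}\,da$, and $f_2,\dots,f_n$ are pullbacks of the moment map of the $K$-action on $T^\ast N$, hence smooth and independent of $(\theta,a)$. As a section of $\wedge^n({}^b T^\ast M)$ this gives
\[
df_1 \wedge \dots \wedge df_n = \tfrac{c}{a}\,da \wedge df_2 \wedge \dots \wedge df_n,
\]
and since $\tfrac{da}{a}$ is a nowhere-vanishing $b$-form involving neither the $x$- nor the $y$-differentials, the wedge is nonzero precisely where $df_2 \wedge \dots \wedge df_n \neq 0$ on $T^\ast N$. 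Applying case (1) to the effective action of the $(n-1)$-dimensional abelian group $K$ on $N^{n-1}$ yields independence of $f_2,\dots,f_n$ on a dense open subset of $T^\ast N$, hence on a dense subset of $M \setminus Z$ and, restricting to $a=0$, on a dense subset of $Z$. Together with the involutivity established above, this verifies both clauses of Definition \ref{def:pbintegrable}. In short, once the density of the independence locus of the fundamental vector fields is secured, the remainder is bookkeeping.
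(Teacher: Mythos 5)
Your argument follows essentially the same route as the paper's, which compresses everything into two lines: involutivity because $\mu$ is a Poisson map and the $X_i$ commute, and independence from effectiveness. The density issue you flag as ``the heart of the matter'' is not resolved in the paper either --- the paper simply asserts that effectiveness makes the $f_i$ independent \emph{everywhere}, which is a stronger and (for non-compact groups, or actions with isotropy) more questionable claim than the fibrewise, dense-locus version you carefully derive; so your write-up is, if anything, more honest about where bare effectiveness needs to be supplemented by generic local freeness of the action.
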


\begin{proof}Denote the components of the moment map by $f_i:= \langle \mu, X_i \rangle$. Effectiveness of the action implies that the $f_i$ are  linearly independent everywhere. Moreover, since $\mu$ is a Poisson map and the elements $X_i$ commute, we obtain $\{f_i, f_j\}  = 0. $
\end{proof}

\subsection{\textbf{The geodesic flow}} A special case of a $\T^n$-action is obtained in the case of a Riemannian manifold $M$ which is assumed to have the property that all its geodesics are closed, so-called {\bf P-manifolds}. Then the geodesics admit a common period (see e.g. \cite{besse}, Lemma 7.11); hence their flow induces an $S^1$-action on $M$ and we can use the twisted $b$-cotangent lift to obtain a $b$-Hamiltonian $S^1$-action on $T^\ast M$. The moment map then corresponds to a {\it non-commutative} $b$-integrable system on $T^\ast M$, which is a generalization of the systems studied here and will be explored in a future work. In dimension two, examples of P-manifolds are Zoll and Tannery surfaces (see Chapter 4 in \cite{besse}). Given an $S^1$-action on such a surface, via the cotangent lift we immediately obtain examples of ($b$-)integrable systems on its cotangent bundle.

\subsection{\textbf{Affine manifolds}}\label{sec:affine} A smooth manifold $M$ is called \textbf{flat} if it admits a flat (i.e. zero curvature) connection. It is called \textbf{affine} if moreover the connection is torsion-free.

 It is well-known that a simply connected flat manifold is parallelizable, i.e. it admits a basis of  vector fields that are everywhere independent. Such a basis is called parallel. The relation between flatness (in the sense that the curvature is zero) and parallelizability was studied in  \cite{thorpe}. We are not assuming that the affine manifold is compact.

Bieberbach \cite{bieberbach} proved in 1911 that any complete affine Riemannian manifold is a finite quotient of $\mathbb R^k\times \mathbb T^{n-k}$.

 \begin{theorem} Let $M$ be a cylinder $\mathbb R^k\times \mathbb T^{n-k}$. Then for any choice of parallel basis $X_1,\dots X_n$, we obtain a ($b$-)integrable system on $T^*M$.
 \end{theorem}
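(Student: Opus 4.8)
The plan is to reduce the statement to an application of Theorem \ref{thm:cotlift_intsys} by exhibiting, on a cylinder $M = \mathbb{R}^k \times \mathbb{T}^{n-k}$, an effective action of an $n$-dimensional abelian Lie group whose fundamental vector fields are precisely a given parallel basis $X_1,\ldots,X_n$. First I would recall that $\mathbb{R}^k \times \mathbb{T}^{n-k}$ carries the obvious transitive action of the abelian group $G = \mathbb{R}^k \times \mathbb{T}^{n-k}$ on itself by translation; this action is effective, and its fundamental vector fields span the standard parallel framing $\partial/\partial s_1,\ldots,\partial/\partial s_k, \partial/\partial \theta_1,\ldots,\partial/\partial \theta_{n-k}$ coming from the affine (product) structure. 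The mild subtlety is that the theorem allows \emph{any} parallel basis $X_1,\ldots,X_n$, not just the coordinate one. Since the manifold is flat and parallelizable, any parallel framing is related to the standard one by a constant invertible matrix; a linear change of basis of $\mathfrak{g}$ replaces the commuting vector fields by independent commuting linear combinations, which still arise as fundamental vector fields of an (abstractly isomorphic) abelian group action. Thus without loss of generality we may arrange the chosen $X_i$ to be the fundamental vector fields of translation.

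Next I would split into the two cases according to which cotangent model applies. In the symplectic case ($k$ arbitrary, no singularity desired) I simply invoke part (1) of Theorem \ref{thm:cotlift_intsys}: the symplectic cotangent lift of the translation action of $G$ on $M$ is Hamiltonian, and the components of its moment map with respect to the basis $X_1,\ldots,X_n$ form an integrable system on $T^\ast M$. In the $b$-symplectic case I would exploit the product structure $M = \mathbb{R}^k \times \mathbb{T}^{n-k}$. Provided at least one circle or line factor is available, I can single out one $S^1$ (or, after the remark following Proposition \ref{prop:twmoment}, treat a line factor similarly) and write $M \cong S^1 \times N$ with $N = \mathbb{R}^k \times \mathbb{T}^{n-k-1}$, and correspondingly $G \cong S^1 \times K$ with $K$ the complementary abelian group. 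The action splits as required, with $S^1$ acting on itself by rotations, so part (2) of Theorem \ref{thm:cotlift_intsys} applies and yields a $b$-integrable system on $T^\ast M$ with respect to the twisted $b$-symplectic form $\omega_{tw,c}$ of Section \ref{subsec:bcot}.

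The main point to verify — and the only place where any real content enters beyond invoking Theorem \ref{thm:cotlift_intsys} — is effectiveness of the $G$-action, since that hypothesis is exactly what guarantees the $f_i = \langle \mu, X_i\rangle$ are everywhere independent. Here effectiveness is immediate: translation of a group on itself is free, hence a fortiori effective, and this persists under the linear reparametrization of the basis. I expect the genuinely delicate step to be the bookkeeping in the $b$-case, namely checking that the factorization $M \cong S^1 \times N$, $G \cong S^1 \times K$ with $S^1$ acting by rotations is compatible with the chosen parallel basis; one must ensure that $X_1$ can be taken to be the generator of the distinguished $S^1$ (so that it matches the $\theta$-coordinate generating the logarithmic Liouville form) while $X_2,\ldots,X_n$ generate the action of $K$ on $N$. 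This is arranged by choosing the parallel basis adapted to the product decomposition, which is possible precisely because the framing is parallel and the decomposition is by affine subfactors. Once this is set up, both conclusions follow directly from the corresponding cases of Theorem \ref{thm:cotlift_intsys}.
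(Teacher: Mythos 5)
Your proposal is correct and follows the same overall strategy as the paper: produce an effective action of an $n$-dimensional abelian group on $M$ whose fundamental vector fields are the $X_i$, and then invoke Theorem \ref{thm:cotlift_intsys} (part (1) for the symplectic lift, part (2) for the twisted $b$-cotangent lift after splitting off an $S^1$ factor). The one place where your route genuinely differs is how the abelian action is obtained. You identify the parallel frame with a constant-matrix transform of the coordinate frame of the translation action of $\mathbb{R}^k\times\mathbb{T}^{n-k}$ on itself; this is valid for the standard product affine structure (where $\nabla X=0$ forces constant coefficients), but it quietly uses that the affine structure is the standard one. The paper instead argues intrinsically: since the connection is torsion-free and the $X_i$ are parallel, $T^\nabla(X_i,X_j)=\nabla_{X_i}X_j-\nabla_{X_j}X_i-[X_i,X_j]=0$ gives $[X_i,X_j]=0$ directly, and completeness lets the joint flows integrate to an $\mathbb{R}^n$-action (which may descend to a torus or cylinder action). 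That argument needs no identification of the parallel frame with the coordinate one and so is marginally more general; your version buys a very concrete description of the action and makes effectiveness (freeness of translation) transparent, a point the paper leaves implicit. Your observation that a constant linear change of Lie-algebra basis only reparametrizes the moment map components, and your care about aligning $X_1$ with the distinguished $S^1$ in the $b$-case, are both consistent with (and slightly more explicit than) what the paper does.
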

\begin{proof}

Let $X_1,\dots, X_n$ be a global basis of parallel vector fields. Since the torsion of the connection is zero and the vector fields $X_i$ are parallel, the expression $\nabla_{X_i} X_j- \nabla_{X_j} X_i-[X_i, X_j]=T^{\nabla}(X_i, X_j)=0$ yields $[X_i, X_j]=0$. In other words, the flows of the vector fields commute. Let us denote by $\Phi_{X_{j}}^{s_j}$  the $s_j$-time flow of the vector field $X_j$.
 Since the manifold is complete, the joint flow of the vector fields $X_i$ then defines an $\mathbb R^n$-action\footnote{Depending on the topology of the fiber, this action may descend to a $\mathbb T^n$-action or more generally to a $\mathbb R^k\times\mathbb T^{n-k}$-action.},
\begin{align*}
\Phi :\R^n \times  M  &\to M \\
 \big((s_1,\ldots,s_n),(x)\big) &\mapsto \Phi_{X_{_1}}^{s_1}\circ\dots\circ \Phi_{X_{n}}^{s_n}((x)).
\end{align*} By the construction defined in Section \ref{sec:cotlift_general} we obtain a ($b$-)Hamiltonian action on $T^*M$ and the components of the moment map of this action define a ($b$-)integrable system (Theorem \ref{thm:cotlift_intsys}).
\end{proof}

\begin{remark}We proved the above result only for cylinders $\mathbb R^k\times \mathbb T^{n-k}$. It would be interesting to explore whether a similar construction is possible for finite quotients of $\mathbb R^k\times \mathbb T^{n-k}$, which by Bieberbach's result would correspond to all complete affine Riemannian manifolds.
\end{remark}

\begin{remark}Even if this procedure yields examples of $b$-integrable systems on non-compact manifolds, we may consider Marsden-Weinstein reduction to obtain compact examples. Reduction in the $b$-setting is already plotted in \cite{Guillemin2013} for abelian groups. The general scheme follows similar guidelines.
\end{remark}

\section{$b$-integrable systems with singularities}\label{sec:examples}

The purpose of this section is to use the ideas described in this paper to depict models for singular integrable systems on ($b$-)symplectic manifolds. When we say that the system is singular on a set we mean that the Hamiltonian vector fields $X_{f_i}$ are not independent there. For singular integrable system, there are some subtleties concerning the distribution defined by it (cf. \cite{miranda, miranda1}). When we say the foliation is defined by a set of first integrals, we mean that the distribution defined by the Hamiltonian vector fields of a set of functions is the same.

In this framework we will introduce non-degenerate integrable systems as twisted $b$-cotangent lifts of some action. This section is intended as an invitation to the study of singularities of integrable systems.

\subsection{The harmonic oscillator} Let us consider the $2$-dimensional harmonic oscillator, i.e. the coupling of two simple harmonic oscillators. The configuration space is $\mathbb R^2$ with standard coordinates $x=(x_1,x_2)$. The phase space is $T^*(\mathbb R^2)$ endowed with symplectic form $\omega=dx_1\wedge dy_1+dx_2\wedge dy_2$.
$H$ is the sum of potential and kinetic energy, $$H=\frac{1}{2}(y_1^2+y_2^2)+ \frac{1}{2}(x_1^2+x_2^2)$$ The level set $H=h$ is a sphere $S^3$. Considering the rotational symmetry on this sphere we fined another first integral.  The angular momentum
$L=x_1y_2-x_2y_1$  corresponds to the action by rotations lifted to the cotangent bundle. Its Hamiltonian vector field  is $X_L=(-x_2,x_1,-y_2,y_1)$. This yields $X_L(H)=\{L, H\}=0$
thus proving integrability of  the system.

To construct a $b$-integrable system, we consider $S^1 \times S^1$ acting on $M:=S^1 \times \R^2$, where the first $S^1$ component acts on itself by rotations and the second one acts on $\R^2$ by rotations. We lift the action to $T^\ast M$, which we endow with the twisted $b$-symplectic form, see Section \ref{sec:cotlift_general}. The moment map with respect to the standard basis of the Lie algebra of $S^1 \times S^1$ is then given by
$$ T^\ast M \cong S^1 \times \R^2 \times \R \times \R^2 \to \R^2:
(\theta, x_1, x_2, a, y_1, y_2) \mapsto (\log|a|, x_1 y_2 - x_2 y_1).$$
Note that the second component is the angular momentum $L$. We can complete these two functions to a $b$-integrable system by adding the energy $H$, i.e. the system is given by $(\log|a|, L,H)$.

\subsection{Hyperbolic singularities}\label{sec:hyp} Consider the group $G:= S^1 \times \R^+$ acting on $M:=S^1 \times \R$  in the following way:
$$(\varphi, g)\cdot (\theta, x):=(\theta + \varphi, g x),$$
i.e. on the $S^1$ component we have rotations and on the $\R$ component we have multiplications.
Then the Lie algebra basis $(\pp{}{\theta},  \pp{}{g})$
induces the following fundamental vector fields on $M$:
$$X_1:= \pp{}{\theta}, \qquad X_2:=x \pp{}{x}.$$
As defined in Section \ref{sec:cotlift_general} we consider the twisted $b$-cotangent lift on $T^\ast M$, i.e. the $b$-symplectic structure $\omega = - d\lambda$ where
 $$\lambda := \log |p| d\theta + y dx$$
and $(\theta, p, x, y)$ are the standard coordinates on $T^\ast M$.
As we showed in Proposition \ref{prop:twmoment}, the lifted action on $T^\ast M$ is $b$-Hamiltonian with moment map given by
$\mu:=(f_1,f_2)$:
\begin{align*}
f_1 &= \langle \lambda, X_1^\# \rangle =  \log|p|, \\
f_2 &= \langle \lambda, X_2^\# \rangle =  x y.
\end{align*}
This type of singular $b$-integrable system is known as {\it hyperbolic singularity}.

\begin{definition} Let $(f_1,f_2)$ be a $b$-integrable system and let $p\in M$ be a point where the system is singular, we say that the singularity is of \textbf{hyperbolic type} if there is a chart $(t,z, x, y)$ centred at $p$ such that the critical hypersurface of $\omega$ is locally around $p$ given by $t=0$ and the integrable system is determined by
 $$f_1 = c \log|t|, \quad  f_2 = x y.$$
\end{definition}

\subsection{Focus-focus singularities}\label{sec:focus} Consider the group $G:= S^1 \times \R^+ \times S^1$ acting on $M:=S^1 \times \R^2$  in the following way:
$$(\varphi, a, \alpha)\cdot (\theta, x_1,x_2):=(\theta + \varphi, a R_\alpha (x_1, x_2)),$$
where $R_\alpha$ is the matrix corresponding to rotation by $\alpha$ in the plane. In other words, on $\R^2$ we have $\R^+$ acting by radial contractions/expansions and $S^1$ acting by rotations.

Using the coordinates above, the Lie algebra basis $(\pp{}{\theta},  \pp{}{a}, \pp{}{\alpha})$
induces the following fundamental vector fields on $M$:
$$X_1:= \pp{}{\theta}, \qquad X_2:=x_1 \pp{}{x_1} + x_2\pp{}{x_2}, \qquad X_3:= x_1 \pp{}{x_2} - x_2 \pp{}{x_1}.$$

As above, we consider the twisted $b$-cotangent lift. The logarithmic Liouville one-form is
$$\lambda := \log |p| d\theta + y_1 dx_1 + y_2 d x_2$$
and the moment map is, according to Proposition \ref{prop:twmoment}, given by
$\mu:=(f_1,f_2,f_3)$ with
\begin{align*}
f_1 &= \langle \lambda, X_1^\# \rangle =  \log|p|, \\
f_2 &= \langle \lambda, X_2^\# \rangle =  x_1 y_1 + x_2 y_2, \\
f_3 &= \langle \lambda, X_3^\# \rangle =  x_1 y_2 -y_1 x_2.
\end{align*}

In the theory of singular integrable systems on symplectic manifolds, the last two components define the well-known focus-focus singularity if we extend the manifold $M$ to include points with $(x_1,x_2)=0$.
\begin{definition}  Let $(f_1,f_2,f_3)$ be a $b$-integrable system and let  $p\in M$ be a point where the system is singular. We say that the singularity is of \textbf{focus-focus type} if there is a chart $(t,z, x_1, y_1,  x_2, y_2)$ centered at $p$ such that the critical hypersurface of $\omega$ is locally around $p$ given by $t=0$ and the integrable system is given by
 $$f_1 = c \log|t|, \quad  f_2 = x_1 y_1 + x_2 y_2,\quad f_3 = x_1 y_2 - y_1 x_2.$$
\end{definition}

\begin{remark} By simplifying the examples above and eliminating the ``$b$-part" we may also introduce non-degenerate singularities of integrable systems on symplectic manifolds in the sense of  \cite{eliasson1}, \cite{mirandazung}, \cite{miranda}, \cite{miranda1} and view them as cotangent lifts.
We may define non-degenerate singularities of integrable systems on $b$-symplectic manifolds as Cartan subalgebras of $\mathfrak{sp}(2n-1,\mathbb R)\oplus \mathbb R$.
\end{remark}

\begin{remark}  We may obtain general $(0,k_h,k_f)$-Williamson type\footnote{The Williamson type of a non-degenerate singularity of an integrable system on a symplectic manifold is given by a triple $(k_e,k_h,k_f)$ and is an invariant of the orbit containing the singularity \cite{mirandazung}. This concept can be generalized to $b$-symplectic manifolds.} singularities of integrable systems and view them as $b$-cotangent lifts by coupling the examples in Subsections \ref{sec:hyp} and \ref{sec:focus}.
\end{remark}

\section{Cotangent models for Poisson manifolds}

Given two integrable systems $(M_1, \Pi_1, F_1)$ and $(M_2, \Pi_2, F_2)$ on Poisson manifolds, we say that the systems are {\bf equivalent} if there exists a Poisson diffeomorphism $\psi$ and a map $\varphi:  \R^s \to  \R^s$ such that we have a commuting diagram:
\begin{align*}
  \begin{diagram}
    \node{(M_1, \Pi_1)}\arrow{e,t}{\psi}\arrow{se,b}{F_1}\node{(M_2, \Pi_2)}\arrow{s,r}{\varphi \circ  F_2}\\
    \node[2]{\R^{s}}
  \end{diagram}
\end{align*}
\subsection{Models for regular Liouville tori}The action-angle theorem for regular Liouville tori on Poisson manifolds proves the equivalence to the following model.\begin{theorem} Let $F=(f_1,\ldots,f_s)$ be an integrable system on the Poisson manifold $(M,\Pi)$, where $\Pi$ has rank $2r = n-s$. Then semilocally around a regular Liouville torus the system is equivalent to
$$(T^\ast \T^r)_{can} \times (\R^{s-r}, \Pi_0, \id),$$
where $(T^\ast \T^r)_{can}$ stands for the model defined in (\ref{eq:can})  and $\Pi_0$ is the zero Poisson structure on $\R^{s-r}$.
\end{theorem}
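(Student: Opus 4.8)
The plan is to reduce the statement to the action-angle coordinate theorem of Laurent-Miranda-Vanhaecke (Theorem~\ref{thm:aa_poisson}) in exactly the same way the symplectic and $b$-symplectic cases were reduced to their respective action-angle theorems. First I would fix a regular Liouville torus $\ST$ of the system and invoke Theorem~\ref{thm:aa_poisson}, which supplies a neighbourhood $U$ of $\ST$ together with coordinates $(\theta_1,\dots,\theta_r,p_1,\dots,p_s): U \to \T^r \times B^s$ in which the Poisson structure reads
$$
\Pi = \sum_{i=1}^r \pp{}{\theta_i} \we \pp{}{p_i},
$$
with the property that the action coordinates $p_1,\dots,p_s$ depend only on the integrals $F$. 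The diffeomorphism underlying these coordinates is the candidate Poisson diffeomorphism $\psi$.

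Next I would identify the target model. The coordinate expression above exhibits $\T^r \times B^s$ as a product: the first $2r$ coordinates $(\theta_1,\dots,\theta_r,p_1,\dots,p_r)$ carry the nondegenerate part of $\Pi$, which is precisely the canonical Poisson bivector dual to $\omega_{can} = \sum_{i=1}^r d\theta_i \we dp_i$ on $T^\ast \T^r$, while the remaining coordinates $p_{r+1},\dots,p_s$ (the transverse coordinates) span an $\R^{s-r}$ factor on which $\Pi$ vanishes, i.e.\ carry the trivial Poisson structure $\Pi_0$. Thus $(\T^r \times B^s, \Pi) \cong (T^\ast \T^r)_{can} \times (\R^{s-r}, \Pi_0)$ as Poisson manifolds, after restricting to a neighbourhood of the zero section. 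I would then note that under $\psi$ the projection onto the $p$-coordinates corresponds, in the first factor, to the moment map $\mu_{can}$ of the cotangent-lifted action (with respect to the canonical basis of $\mathfrak{t}^\ast$), and in the second factor to the identity map on $\R^{s-r}$. Since $p_1,\dots,p_s$ depend only on $F$, there is a map $\varphi: \R^s \to \R^s$ expressing this dependence, and the triangle
$$
F = \varphi \circ \big( \mu_{can} \times \id \big) \circ \psi
$$
commutes, establishing the claimed equivalence of integrable systems.

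The one genuinely nontrivial point is the splitting of the Poisson structure as a product, and in particular the verification that $\psi$ is a Poisson diffeomorphism onto the product model rather than merely a coordinate chart. This is where the specific form of $\Pi$ delivered by Theorem~\ref{thm:aa_poisson} is essential: because the only nonzero brackets are $\{\theta_i,p_i\}=1$ for $i\le r$ and the transverse coordinates $p_{r+1},\dots,p_s$ Poisson-commute with everything, the bivector literally decomposes as a direct sum across the two factors, so the product Poisson structure on $(T^\ast\T^r)_{can}\times(\R^{s-r},\Pi_0)$ is matched exactly. I expect this to be routine once the coordinate normal form is in hand, so the main obstacle is really bookkeeping---correctly matching the transverse directions with the trivially-Poisson factor and checking that the restriction to a neighbourhood of the zero section is consistent with the neighbourhood $U$ of $\ST$ produced by the action-angle theorem.
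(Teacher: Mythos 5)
Your proposal is correct and follows the same route as the paper: both invoke the Laurent--Miranda--Vanhaecke action-angle theorem (Theorem~\ref{thm:aa_poisson}) to obtain the normal-form coordinates, identify the nondegenerate block with $(T^\ast \T^r)_{can}$ and the transverse coordinates with the trivially-Poisson factor $(\R^{s-r},\Pi_0,\id)$, and close the commuting triangle via the map $\varphi$ recording the dependence of the action and transverse coordinates on $F$. The extra care you take in checking the Poisson splitting is sound but not treated as an issue in the paper, which regards it as immediate from the coordinate expression of $\Pi$.
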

\begin{proof}
The proof uses the action-angle coordinate theorem for integrable systems on Poisson manifolds (Theorem \ref{thm:aa_poisson}). Semilocally around a regular Liouville torus we have coordinates $(\theta_1,\dots,\theta_r,p_1,\dots,p_{s})$ such that $\Pi=\sum_{i=1}^r \pp{}{\theta_i}\we\pp{}{p_i}$
  and the coordinates $p_1,\ldots, p_s$  depend only on the integrals. Therefore, there exists a map $\varphi$ such that the following diagram commutes:
  \begin{align*}
  \begin{diagram}
    \node{U}\arrow{e,t}{\psi}\arrow{se,b}{F}\node{(T^\ast \T^r \times \R^{s-r}, \Pi_{can}+\Pi_0) }\arrow{s,r}{\varphi \circ  (\mu_{can},\id)}\\
    \node[2]{{\R^{s}}}
  \end{diagram}
\end{align*}
\end{proof} In view of this result, using the constructions in Theorem \ref{thm:cotlift_intsys}, we can take the product of the cotangent lift of an abelian action to $T^\ast M$ with  $(\R^k,\Pi_0,\id)$ to obtain examples of integrable systems in Poisson manifolds;  the integrals are $(f_1,\ldots,f_n,z_1,\ldots,z_k)$, where $z_1,\ldots,z_k$ are the standard coordinates on $\R^k$.

\subsection{Models for singular Liouville tori}The models described above apply to regular Liouville torus of an integrable system.  Other than the $b$-Poisson case, \emph{is there any model for singular tori on general Poisson manifolds?}

Weinstein's splitting theorem \cite{weinstein} gives a local model for a Poisson structure in a neighbourhood of any point. Such a model consists of a local product of a symplectic manifold $S$ with a Poisson manifold which is transversal to $S$ and whose Poisson structure vanishes at a point. This product model extends to several  objects in the Poisson category. However, this is not the case of integrable systems:
As it is proved in \cite{Laurent-Gengoux2010}, an integrable system on a Poisson manifold does not always split locally as a product of an integrable system on a symplectic manifold and an integrable system on the  transverse Poisson manifold. In \cite{camilleeva} cohomological obstructions are studied for such a splitting to exist.

 However such a splitting is possible in a more general class of integrable systems: In \cite{Laurent-Gengoux2010} an action-angle theorem is proved for non-commutative (or superintegrable) systems. A non-commutative integrable system
on a Poisson manifold is given by  an $s$-tuple of functionally independent functions
  $F=(f_1,\dots,f_s)$  such that a subset of $r$ functions $f_1,\dots,f_r$ is in involution with all the functions $f_1,\dots,f_s$;
  ($r+s =\dim M$) and the Hamiltonian vector fields of the functions $f_1, \dots,f_r$ are linearly independent at some point.

 The action-angle coordinate theorem  proved in  \cite{Laurent-Gengoux2010} (Theorem 1.1) gives a semilocal description of the Poisson structure around a standard Liouville torus of a non-commutative integrable system. It provides a system of coordinates $\theta_1,\dots,\theta_{r}$  (\emph{angle coordinates}), $p_1,\dots,p_r$ (\emph{action coordinates}) and$z_1,\dots,z_{s-r}$ (\emph{transverse coordinates}) such that,
\begin{enumerate}
    \item The Poisson structure can be written as

    $       \Pi=\sum_{i=1}^r \pp{}{\theta_i}\we\pp{}{p_i} + \sum_{k,l=1}^{s-r} \phi_{kl}(z) \pp{}{z_k}\we\pp{}{z_l}.$

    \item The leaves of the surjective submersion $F=(f_1,\dots,f_{s})$ are given by the projection onto the
      second component $\T^r \times B^{s}$, in particular, the functions $f_1,\dots,f_s$ depend on
      $p_1,\dots,p_r,z_1, \dots, z_{s-r}$ only.
  \end{enumerate}

In particular this description takes singularities into account and can be used to give explicit models in a neighbourhood of a Liouville torus lying on the singular locus of a Poisson structure in some particular cases, for instance when the transversal Poisson structure is linear (or linearizable).
Another interesting direction of study is the one explored in   \cite{bolsinov2} where singularities of bi-Hamiltonian systems and algebraic properties of Poisson pencils are related.

\end{document}